\DeclareMathOperator{\Leb}{\operatorname{Leb}}
\DeclareMathOperator{\CE}{\mathcal{C}\mathcal{E}}
\DeclareMathOperator{\NR}{\mathcal{N}\mathcal{R}}
\DeclareMathOperator{\PR}{\mathcal{P}\mathcal{R}}
\DeclareMathOperator{\R}{\mathcal{R}}
\theoremstyle{plain}
\newtheorem{Th}{Theorem}[section]
\newtheorem{Lemma}[Th]{Lemma}
\newtheorem{Cor}[Th]{Corollary}
\newtheorem{Prop}[Th]{Proposition}
\newtheorem{Thx}{Theorem}
\theoremstyle{definition}
\newtheorem{Def}[Th]{Definition}
\newtheorem{Rem}[Th]{Remark}
\newtheorem{?}[Th]{Problem}
\newtheorem*{acknowledgement}{Acknowledgement}
\title{Critical recurrence in the real quadratic family}
\author{Mats Bylund}
\address{Centre for Mathematical Sciences, Lund University, Box 118, 221 00 Lund, Sweden}
\email{mats.bylund@math.lth.se}
\subjclass[2010]{37B20, 37E05, 37A10}
\begin{document}

\begin{abstract}
We study recurrence in the real quadratic family and give a sufficient condition on the recurrence rate $(\delta_n)$ of the critical orbit such that, for almost every nonregular parameter $a$, the set of $n$ such that $\vert F^n(0;a) \vert < \delta_n$ is infinite. In particular, when $\delta_n = n^{-1}$, this extends an earlier result by Avila and Moreira.
\end{abstract}

\maketitle

\section{Introduction}

\subsection{Regular and nonregular parameters}
Given a real parameter \(a\), we let \(x \mapsto 1-ax^2 = F(x;a)\) denote the corresponding real quadratic map. We will study the recurrent behaviour of the critical point \(x = 0\) when the parameter belongs to the interval \([1,2]\). For such a choice of parameter there exists an invariant interval \(I_a \subset [-1,1]\), i.e.
\[
F(I_a;a) \subset I_a,
\]
containing the critical point \(x = 0\). The parameter interval is naturally divided into a \emph{regular} (\(\R\)) and \emph{nonregular} (\(\NR\)) part
\[
[1,2] = \R \cup \NR,
\]
with \(a \in \R\) being such that \(x \mapsto 1-ax^2\) has an attracting cycle, and \(\NR = [1,2] \smallsetminus \R\). These two sets turn out to be intertwined in an intricate manner, and this has led to an extensive study of the real quadratic family. We briefly mention some of the more fundamental results, and refer to \cite{Lyubich_notices} for an overview.

The regular maps are from a dynamic point of view well behaved, with almost every point, including the critical point, tending to the attracting cycle. This set of parameters, which with an application of the inverse function theorem is seen to be open, constitutes a large portion of $[1,2]$. The celebrated genericity result, known as the real Fatou conjecture, was settled independently by Graczyk--\' Swi\k{a}tek \cite{GS_fatou} and Lyubich \cite{Lyubich_fatou}: \emph{$\R$ is (open and) dense}. This has later been extended to real polynomials of arbitrary degree by Kozlovski--Shen--van Strien \cite{Koz_Shen_Strien}, solving the second part of the eleventh problem of Smale \cite{Smale}. The corresponding result for complex quadratic maps, the Fatou conjecture, is still to this day open.

The nonregular maps, in contrast to the regular ones, exhibit chaotic behaviour. In \cite{Jakobson} Jakobson showed the abundance of \emph{stochastic} maps, proving that the set of parameters $a \in \mathcal{S}$ for which the corresponding quadratic map has an absolutely continuous (with respect to Lebesgue) invariant measure (a.c.i.m), is of positive Lebesgue measure. This showed that, from a probabilistic point of view, nonregular maps are not negligible: for a regular map, any (finite) a.c.i.m is necessarily singular with respect to Lebesgue measure.

Chaotic dynamics is often associated with the notion of sensitive dependence on initial conditions. A compelling way to capture this property was introduced by Collet and Eckmann in \cite{Collet_Eckmann} where they studied certain maps of the interval having expansion along the critical orbit, proving abundance of chaotic behaviour. This condition is now known as the \emph{Collet--Eckmann condition}, and for a real quadratic map it states that
\begin{equation}\label{CE_condition}
\liminf_{n\to\infty} \frac{\log \vert \partial_x F^n(1;a)\vert}{n} > 0.
\end{equation}
Focusing on this condition, Benedicks and Carleson gave in their seminal papers \cite{BC1,BC2} another proof of Jakobson's theorem by proving the stronger result that the set $\CE$ of Collet--Eckmann parameters is of positive measure. As a matter of fact, subexponential increase of the derivative along the critical orbit is enough to imply the existence of an a.c.i.m, but the stronger Collet-Eckmann condition implies, and is sometimes equivalent with, ergodic properties such as exponential decay of correlations \cite{KN,Young,NS}, and stochastic stability \cite{BV}. 
For a survey on the role of the Collet--Eckmann condition in one-dimensional dynamics, we refer to \cite{CE_one_dimension}.

Further investigating the stochastic behaviour of nonregular maps, supported by the results in \cite{Lyubich_parapuzzles,MN}, Lyubich \cite{Lyubich_stochastic} established the following famous dichotomy: \emph{almost all real quadratic maps are either regular or stochastic}. Thus it turned out that the stochastic behaviour described by Jakobson is in fact typical for a nonregular map. In \cite{AM} Avila and Moreira later proved the strong result that expansion along the critical orbit is no exception either: \emph{almost all nonregular maps are Collet--Eckmann}. Thus a typical nonregular map have excellent ergodic properties.

\subsection{Recurrence and Theorem A}
In this paper we will study recurrence of the critical orbit to the critical point, for a typical nonregular (stochastic, Collet--Eckmann) real quadratic map. For this reason we introduce the following set.
\begin{Def}[Recurrence Set]\label{recurrence_set_definition}
Given a sequence $(\delta_n)_{n = 1}^\infty$ of real numbers, we define the \emph{recurrence set} as
\begin{align*}
\Lambda(\delta_n) = \{a \in \NR : \vert F^n(0;a)\vert < \delta_n\ \text{for finitely many}\ n\}.
\end{align*}
\end{Def}
In \cite{AM} Avila and Moreira also established the following recurrence result, proving a conjecture by Sinai: \emph{for almost every nonregular parameter $a$}
\[
\limsup_{n \to \infty} \frac{-\log \vert F^n(0;a) \vert}{\log n} = 1.
\]
Another way to state this result is as follows: for almost every nonregular parameter $a$, the set of $n$ such that $\vert F^n(0;a) \vert < n^{-\theta}$ is finite if $\theta > 1$ and infinite if $\theta < 1$. In terms of the above defined recurrence set, this result translates to
\[
\Leb \Lambda( n^{-\theta}) = \begin{cases} \Leb \NR &\mbox{if}\ \theta > 1,\\ 0 &\mbox{if}\ \theta < 1. \end{cases}
\]
In \cite{GS}, as a special case, a new proof of the positive measure case in the above stated result was obtained, together with a new proof that almost every nonregular map is Collet--Eckmann. In this paper we will give a new proof of the measure zero case, and in particular we will fill in the missing case of $\theta = 1$, thus completing the picture of polynomial recurrence. Our result will be restricted to the following class of recurrence rates.
\begin{Def}\label{admissible_definition}
A nonincreasing sequence $(\delta_n)$ of positive real numbers is called \emph{admissible} if there exists a constant $0 \leq \overline{e} < \infty$, and an integer $N \geq 1$, such that
\[
\delta_n \geq \frac{1}{n^{\overline{e}}} \qquad (n \geq N).
\]
\end{Def}
The following is the main result of this paper.
\begin{Thx}\label{Theorem_A}
There exists $\tau \in (0,1)$ such that if $(\delta_n)$ is admissible and
$$
\sum \frac{\delta_n}{\log n}\tau^{(\log^* n)^3} = \infty,
$$
then $\Leb (\Lambda(\delta_n) \cap \CE) = 0$.
\end{Thx}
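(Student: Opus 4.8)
The plan is to set up a Benedicks--Carleson type parameter-exclusion scheme, but steered toward the opposite conclusion: instead of deleting parameters whose critical orbit returns deeply near $0$, I will show that for almost every Collet--Eckmann parameter the inequality $|F^n(0;a)|<\delta_n$ holds infinitely often, by proving a \emph{divergent} lower bound for the conditional probability of such a return along a nested sequence of parameter partitions and then running a second Borel--Cantelli argument on the resulting tree. Since $\CE$ is, modulo a null set, a countable union of sets $E$ on which the Collet--Eckmann exponent and constant, the distortion constants of the lemmas used below, and --- by the Sinai-type recurrence theorem of Avila--Moreira --- a polynomial recurrence bound $|F^n(0;a)|\ge n^{-2}$ (for $n$ large) are all uniform, it suffices to prove $\Leb(\Lambda(\delta_n)\cap E)=0$ for each such $E$.

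The tools on $E$ are the classical ones. First, the derivative comparison $\partial_a F^{n}(0;a)\asymp \partial_x F^{n-1}(1;a)$ on parameter intervals that have not yet ``escaped'', which transfers the exponential expansion along the critical orbit in phase space to the parameter map $a\mapsto F^n(0;a)$; combined with bounded distortion of this map, it yields that parameter intervals grow and that their images under $F^n(0;\cdot)$ eventually reach a fixed macroscopic scale $\sigma>0$. Starting from a fixed time $N_0$ and from a countable family of parameter intervals covering $E$ modulo a null set on which $F^{N_0}(0;\cdot)$ is a diffeomorphism with bounded distortion and macroscopic image, I would build a refining sequence of partitions $\mathcal P_0,\mathcal P_1,\dots$, passing from $\mathcal P_k$ to $\mathcal P_{k+1}$ by iterating each piece until its image reaches scale $\sigma$ and then subdividing it so that distortion stays bounded and no piece makes an anomalously deep return; write $n_k(\omega)$ for the escape time of $\omega\in\mathcal P_k$.

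The core estimate is a conditional lower bound for the measure of close returns. For $\omega\in\mathcal P_k$ with escape time $n=n_k(\omega)$, the image $F^n(0;\omega)$ contains an interval of length $\ge\sigma$ and $a\mapsto F^n(0;a)$ has distortion bounded by $D$ on $\omega$, so pulling back the interval $(-\delta_n,\delta_n)$ gives
\[
\frac{\Leb\{a\in\omega:\ |F^{n}(0;a)|<\delta_{n}\}}{\Leb\omega}\ \ge\ c\,\delta_{n},
\]
with $c$ uniform. To convert this into the hypothesis of the theorem I must bound the gaps between consecutive escape times: combining the bound-period length (comparable to minus the logarithm of the return depth), admissibility of $(\delta_n)$, the uniform recurrence bound, and a large-deviation estimate --- organized hierarchically according to the size of the returns --- for the measure of parameters that accumulate unusually deep returns over a block of iterates, I would show that after discarding a controlled proportion of $E$ one has $n_{k+1}-n_k\lesssim \log n\cdot \tau^{-(\log^* n)^3}$ for a suitable $\tau\in(0,1)$. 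Since an admissible sequence is nonincreasing, summing the displayed bound over the escape times lying in a window then produces a total of at least a constant times $\sum \frac{\delta_n}{\log n}\tau^{(\log^* n)^3}$, which diverges by hypothesis.

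Finally, because the construction restarts inside any piece $\omega$ with the same uniform constants, the events ``a close return occurs at the next escape time'' have conditional probabilities, given $\mathcal P_k$, whose sum over $k$ diverges by the previous step; a conditional (Lévy) Borel--Cantelli lemma then shows that almost every $a\in E$ belongs to infinitely many of these events, hence satisfies $|F^m(0;a)|<\delta_m$ for infinitely many $m$. Summing over the countably many starting intervals and the countably many sets $E$ gives $\Leb(\Lambda(\delta_n)\cap\CE)=0$. I expect the crux to be the gap estimate $n_{k+1}-n_k\lesssim \log n\cdot \tau^{-(\log^* n)^3}$ with its precise iterated-logarithm exponent: this needs a delicate bookkeeping of how deep, and how often, the critical orbit and the neighbouring parameters descend near $0$ before escaping again, carried out in a hierarchy indexed by the magnitude of the returns, together with checking that the distortion and the derivative-comparison bounds are not destroyed during these excursions. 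Admissibility is used precisely here, to cap the bound-period lengths and to keep the measure losses summable.
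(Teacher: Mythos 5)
Your overall framework --- a Benedicks--Carleson parameter partition combined with a conditional Borel--Cantelli argument to show that almost every parameter has infinitely many close returns --- is essentially equivalent, modulo complementation, to what the paper does (the paper bounds $\vert\Delta_{k+1}\vert \leq \vert\Delta_k\vert(1 - \delta_{m_{k+1}}\tau^{(\log^* m_{k+1})^3})$ and shows the infinite product vanishes, which is the same estimate phrased as ``mass is removed'' rather than ``a close return happens''). The use of Avila--Moreira to supply a uniform polynomial recurrence bound, instead of the paper's self-contained $\PR$ device, is a legitimate shortcut. However, there are two real gaps at the quantitative heart of the argument.

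First, you misplace the iterated-logarithm factor. You claim that on each piece $\omega$ the parameter-to-phase map has a \emph{uniform} distortion constant $D$, giving a close-return fraction $\geq c\,\delta_n$ with $c$ uniform, and then try to recover the hypothesis $\sum \delta_n\tau^{(\log^*n)^3}/\log n = \infty$ by a gap bound of the form $n_{k+1}-n_k \lesssim \log n\cdot\tau^{-(\log^*n)^3}$. But in this scheme the escape-time gap is genuinely only $\lesssim \log n$: by (BA) the return depth satisfies $e^{-r}\gtrsim\delta_n\gtrsim n^{-\overline e}$, so $r\lesssim\log n$, and the bounded/free/inessential periods and the essential-return cascade all add up to $O(r)$; no iterated-logarithm factor appears there. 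The factor $\tau^{(\log^*n)^3}$ in the theorem comes instead from the fact that distortion is \emph{not} uniformly bounded: the main distortion lemma gives only $D_1^{(\log^*m_k)^2}$ over the whole history of $\omega_k$, and during one inter-return cycle one makes $\sim\log^*m_k$ partial partitions (at essential returns), each costing a factor $D_1^{(\log^*m_k)^2}$ in the parameter-measure comparison, whence the $(\log^*m_k)^3$ in the exponent. Your premise that one can ``subdivide so that distortion stays bounded'' is precisely what fails here; the paper explicitly notes its distortion estimates are unbounded, and this unboundedness is the reason the statement requires the correction factor at all. Without an argument that bounded distortion is achievable (which would actually yield a \emph{stronger} theorem without the $\tau^{(\log^*n)^3}$ factor, contradicting the authors' own remark that they cannot treat $\delta_n = 1/(n\log n)$), the route is not justified.

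Second, the conditional lower bound $\Leb\{a\in\omega:\vert F^n(0;a)\vert<\delta_n\}/\Leb\omega\geq c\,\delta_n$ does not follow merely from $F^n(0;\omega)$ having macroscopic length $\geq\sigma$ and bounded distortion. The image is an interval of length $\geq\sigma$, but it need not cover any neighbourhood of $0$, in which case the set $\{a\in\omega:\vert F^n(0;a)\vert<\delta_n\}$ is empty. You must continue iterating the escaped piece until its image actually straddles the origin (the paper's passage ``from the first escape return to the $(k{+}1)^{\text{th}}$ complete return'', which costs an additional bounded number of iterates and a further bounded number of partitions), and it is at that \emph{complete return} time --- not at the escape time --- that the pull-back of $(-\delta_n,\delta_n)$ gives the claimed proportion. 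This adjustment is compatible with the rest of your scheme, but it has to be made, and it is one of the sources of the partitions whose accumulated distortion produces the iterated-logarithm loss you were hoping to avoid.
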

Here $\log^*$ denotes to so-called \emph{iterated logarithm}, which is defined recursively as
$$
\log^* x = \begin{cases} 0 &\mbox{if}\ x \leq 1, \\ 1 + \log^* \log x &\mbox{if}\ x > 1. \end{cases}
$$
That is, $\log^* x$ is the number of times one has to iteratively apply the logarithm to $x$ in order for the result to be less than or equal to $1$. In particular, $\log^*$ grows slower than $\log_j = \log \circ \log_{j-1}$, for any $j \geq 1$.

Theorem A, together with the fact that almost every nonregular real quadratic map is Collet--Eckmann, clearly implies 
\begin{Cor}
$\Leb \Lambda(n^{-1}) = 0$.
\end{Cor}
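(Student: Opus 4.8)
The plan is to obtain the corollary as a direct consequence of Theorem~\ref{Theorem_A} together with the theorem of Avila--Moreira quoted in the introduction, that almost every nonregular parameter is Collet--Eckmann, i.e. $\Leb(\NR \smallsetminus \CE) = 0$. Two things have to be checked: that the sequence $\delta_n = n^{-1}$ is admissible, and that it satisfies the summability hypothesis of Theorem~\ref{Theorem_A} for the relevant constant $\tau \in (0,1)$. Admissibility is immediate from Definition~\ref{admissible_definition}: the sequence $(n^{-1})$ is positive and nonincreasing, and one may take $\overline{e} = 1$ and $N = 1$, since $\delta_n = n^{-1} \geq n^{-1}$ for all $n \geq 1$.

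The one quantitative step is the divergence of $\sum \frac{\delta_n}{\log n}\,\tau^{(\log^* n)^3} = \sum \frac{1}{n\log n}\,\tau^{(\log^* n)^3}$. I would group the terms by the value of $\log^* n$. Writing $e_0 = 1$ and $e_k = e^{e_{k-1}}$ for the tower of exponentials, one has $\log^* n = k$ exactly when $e_{k-1} < n \leq e_k$, and on this range the factor $\tau^{(\log^* n)^3}$ equals the constant $\tau^{k^3}$. By the integral test there is an absolute $c > 0$ with
\[
\sum_{e_{k-1} < n \leq e_k} \frac{1}{n\log n} \geq c\bigl(\log\log e_k - \log\log e_{k-1}\bigr) = c\,(e_{k-2} - e_{k-3})
\]
for all large $k$, so the $k$-th block contributes at least $c\,\tau^{k^3}(e_{k-2}-e_{k-3})$ to the series. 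Since $(1/\tau)^{k^3} = \exp\bigl(k^3\log(1/\tau)\bigr)$ grows only exponentially in a polynomial of $k$, whereas $e_{k-2}$ grows like a tower of height $k-2$, we have $\tau^{k^3}(e_{k-2}-e_{k-3}) \to \infty$; in particular the general term of the series does not tend to $0$, and the series diverges. The point is exactly that $\log^*$ grows slower than every finite iterate $\log_j$, so the weight $\tau^{(\log^* n)^3}$ is far too mild to overcome the divergence of $\sum 1/(n\log n)$.

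With both hypotheses verified, Theorem~\ref{Theorem_A} yields $\Leb(\Lambda(n^{-1}) \cap \CE) = 0$. Since $\Lambda(n^{-1}) \subset \NR$ by Definition~\ref{recurrence_set_definition}, I would then split
\[
\Lambda(n^{-1}) = \bigl(\Lambda(n^{-1}) \cap \CE\bigr) \cup \bigl(\Lambda(n^{-1}) \smallsetminus \CE\bigr), \qquad \Lambda(n^{-1}) \smallsetminus \CE \subseteq \NR \smallsetminus \CE,
\]
whence $\Leb\Lambda(n^{-1}) \leq \Leb(\Lambda(n^{-1}) \cap \CE) + \Leb(\NR \smallsetminus \CE) = 0$, the last term vanishing by Avila--Moreira. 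I do not anticipate any genuine obstacle; the only place needing care is the block estimate above, where in any case the bound is generous.
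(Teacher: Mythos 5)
Your proposal is correct and follows exactly the paper's route: the paper deduces the corollary in one line from Theorem~\ref{Theorem_A} together with the Avila--Moreira theorem that almost every nonregular parameter is Collet--Eckmann, which is precisely your decomposition. Your explicit verification of admissibility and of the divergence of $\sum \tau^{(\log^* n)^3}/(n\log n)$ by blocking on the value of $\log^* n$ is sound (the tower $e_{k-2}$ indeed dominates $(1/\tau)^{k^3}$) and simply fills in details the paper leaves implicit.
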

\begin{Rem}
In fact, one can conclude the stronger statement 
\[
\Leb \Lambda(1/(n\log\log n)) = 0.
\]
At this moment we do not get any result for when $\delta_n = 1/(n \log n)$, and this would be interesting to investigate further.
\end{Rem}
One of the key points in the proof of Theorem~\ref{Theorem_A} is the introduction of unbounded distortion estimates; this differs from the classical Benedicks--Carleson techniques.

\begin{small}
\begin{acknowledgement}
This project has been carried out under supervision of Magnus Aspenberg as part of my doctoral thesis. I am very grateful to Magnus for proposing this problem, for his support, and for many valuable discussions and ideas. I express gratitude to my co-supervisor Tomas Persson for helpful comments and remarks. I would also like to thank Viviane Baladi for communicating useful references, and I thank Michael Benedicks for interesting discussions. Finally I thank the referee whose careful reading and comments helped improve the manuscript.
\end{acknowledgement}
\end{small}

\section{Reduction and outline of proof}

\subsection{Some definitions and Theorem B} 
We reduce the proof of Theorem~\ref{Theorem_A} to that of Theorem~\ref{Theorem_B}, stated below. For this we begin with some suitable definitions.

It will be convenient to explicitly express the constant in the Collet--Eckmann condition (\ref{CE_condition}), and for this reason we agree on the following definition.
\begin{Def}
Given $\gamma, C >0$ we call a parameter $a$ $(\gamma,C)$-Collet--Eckmann if 
\[
\vert \partial_x F^n(1;a) \vert \geq C e^{\gamma n} \qquad (n \geq 0).
\]
The set of all $(\gamma,C)$-Collet--Eckmann parameters is denoted $\CE(\gamma,C)$.
\end{Def}

Our parameter exclusion will be carried out on intervals centred at Collet--Eckmann parameters satisfying the following recurrence assumption.
\begin{Def}
A Collet--Eckmann parameter $a$ is said to have \emph{polynomial recurrence} (PR) if there exist constants $K=K(a) > 0$ and $\sigma = \sigma(a) \geq 0$ such that
\[
\vert F^n(0;a) \vert \geq \frac{K}{n^\sigma} \qquad (n \geq 1).
\]
The set of all PR-parameters is denoted $\PR$.
\end{Def}

Finally, we consider parameters for which the corresponding quadratic maps satisfy the reversed recurrence condition after some fixed time $N \geq 1$:
\[
\Lambda_N(\delta_n) = \{a \in \NR : \vert F^n(0;a) \vert \geq \delta_n\ \text{for all}\ n \geq N\}.
\]
Clearly we have that
\[
\Lambda(\delta_n) = \bigcup_{N \geq 1} \Lambda_N(\delta_n).
\]
Theorem~\ref{Theorem_A} will be deduced from
\begin{Thx}\label{Theorem_B}
There exists $\tau \in (0,1)$ such that if $(\delta_n)$ is admissible and 
\[
\sum \frac{\delta_n}{\log n}\tau^{(\log^* n)^3} = \infty,
\]
then for all $N \geq 1$, $\gamma > 0$, $C > 0$, and for all $a \in \mathcal{P}\mathcal{R}$, there exists an interval $\omega_a$ centred at $a$ such that
\[
\Leb(\Lambda_N(\delta_n) \cap \CE(\gamma,C) \cap \omega_a) = 0.
\]
\end{Thx}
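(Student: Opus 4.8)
The plan is to fix $a\in\PR$ together with the Collet--Eckmann data $(\gamma,C)$ and the time $N$, and to run a parameter-exclusion (Benedicks--Carleson type) argument on a small interval $\omega_a$ centred at $a$. The key structural tool is the standard correspondence, valid as long as the critical orbit of every parameter in a partition element has not yet made a deep return, between the parameter derivative $\partial_a F^n(0;a)$ and the phase-space derivative $\partial_x F^{n-1}(1;a)$: up to a bounded factor these agree, so the map $a\mapsto F^n(0;a)$ behaves on each surviving interval like a diffeomorphism with derivative comparable to $Ce^{\gamma n}$. Because $a\in\PR$, the critical orbit of $a$ itself stays polynomially far from $0$, which gives us a definite ``starting'' scale and lets us begin the induction; the Collet--Eckmann hypothesis on the parameters we keep supplies the expansion that makes the exclusion converge.

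The steps, in order, would be: (1) Set up the algorithm. At stage $n$ we have a finite collection $\mathcal{P}_n$ of intervals partitioning (most of) $\omega_a$, on each of which $F^n(0;\cdot)$ is a diffeomorphism onto an interval of definite size (the ``bound distortion'' estimate, proved from the CE assumption via the usual splitting into bound and free periods). (2) Deletion rule: from each $\omega\in\mathcal{P}_n$ delete the sub-interval of parameters $b$ with $|F^n(0;b)|<\delta_n$; equivalently, since $F^n(0;\cdot)$ expands at rate $\gtrsim e^{\gamma n}$ and has bounded distortion, the deleted portion has relative measure $\lesssim \delta_n e^{-\gamma n}/|F^n(0;\omega)|$. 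The point of admissibility is exactly that $\delta_n\ge n^{-\bar e}$ is never so small that these deleted pieces fail to be ``resolved'' by the partition at the relevant scale. (3) Estimate the total measure of $\Lambda_N(\delta_n)\cap\CE(\gamma,C)\cap\omega_a$: a parameter in this set survives every deletion, so its measure is bounded by $|\omega_a|\prod_n(1-c_n)$ where $c_n$ is the fraction deleted at step $n$, and one must show $\sum c_n=\infty$. (4) Show $\sum c_n=\infty$ using the divergence hypothesis $\sum \frac{\delta_n}{\log n}\tau^{(\log^* n)^3}=\infty$: the factor $\frac{1}{\log n}\tau^{(\log^* n)^3}$ is precisely the accumulated loss coming from the (rare, and extremely mildly growing) bound periods and from the partitioning overhead on the scale of the $n$-th return, which is why the threshold has that peculiar shape. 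Conclude that the surviving set has measure zero.

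The main obstacle — and the heart of the paper — is step (4): controlling the cumulative effect of the recursive partitioning and the bound-period distortion so that the ``efficiency loss'' at step $n$ is no worse than $\frac{1}{\log n}\tau^{(\log^* n)^3}$, uniformly over the tree of surviving intervals. This requires a careful large-deviation / combinatorial bookkeeping of how deep returns (which trigger bound periods, hence temporary loss of expansion and forced sub-partitioning) are distributed along a typical surviving itinerary; the iterated-logarithm exponent is what one gets by iterating the estimate ``a return at time $m$ of depth $d$ costs a factor controlled by $d$, and depths are themselves logarithmically constrained'' a bounded-but-unbounded ($\sim\log^* n$) number of times. A secondary difficulty is making the distortion estimates uniform as $\omega_a\to\{a\}$: one needs the interval $\omega_a$ small enough (depending on $a$, $\gamma$, $C$, $N$) that every parameter in $\omega_a$ inherits a CE-type estimate with slightly weakened constants up to the first deep return, which is where the hypothesis $a\in\PR$ rather than merely $a\in\CE$ is used. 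Once the measure estimate is in place, deducing Theorem~A is the routine countable union over $N$, $\gamma$, $C$ (rational) and a Vitali-type covering of $\PR$, using that $\CE\smallsetminus\PR$ and $\NR\smallsetminus\CE$ are already known (or assumed available from the cited literature) to be handled separately.
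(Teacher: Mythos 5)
Your proposal follows the same high-level strategy as the paper --- a Benedicks--Carleson parameter-exclusion around a $\PR$-parameter, with a Tsujii-type phase/parameter comparison and Collet--Eckmann expansion --- but there are two concrete gaps in the quantitative core of the argument.

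First, the deletion estimate in your step~(2) is stated in the wrong direction. You compute an \emph{upper} bound, $c_n \lesssim \delta_n e^{-\gamma n}/\vert F^n(0;\omega)\vert$, on the fraction deleted at step $n$; but to make $\prod_n(1-c_n)=0$ you need a \emph{lower} bound on $c_n$. Moreover, the written formula appears to count the expansion factor twice: if $\vert F^n(0;\omega)\vert \sim e^{\gamma n}\vert\omega\vert$, then $\delta_n e^{-\gamma n}/\vert F^n(0;\omega)\vert \sim \delta_n e^{-2\gamma n}/\vert\omega\vert$, which is summable, so your step~(4) could not conclude. More importantly, at the vast majority of times $n$ the set $\{b\in\omega : \vert F^n(0;b)\vert<\delta_n\}$ is simply empty for a given partition element, because $\xi_n(\omega)$ does not come anywhere near $(-\delta_n,\delta_n)$. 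The paper deletes only at \emph{complete returns} $m_0<m_1<m_2<\cdots$, which are the times when $\xi_{m_k}(\omega)$ actually meets $(-\delta_{m_k}/3,\delta_{m_k}/3)$; at such a time the removed fraction is bounded \emph{below} by $\delta_{m_{k+1}}\tau^{(\log^* m_{k+1})^3}$ thanks to the Main Distortion Lemma (which gives a per-return distortion constant $D_1^{(\log^* m_k)^2}$, compounded over $\sim\log^* m_k$ essential returns --- this is the source of the cube).

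Second, your proposal is missing the mechanism that explains the factor $1/\log n$ in the hypothesis and bridges from ``deletions occur at the sparse sequence $(m_k)$'' to the divergence of $\sum\delta_n\tau^{(\log^*n)^3}/\log n$. The paper shows that consecutive complete returns satisfy $m_{k+1}-m_k\lesssim\log m_k$, and then invokes the Schl\"omilch condensation theorem to convert $\sum_k\delta_{m_k}\tau^{(\log^*m_k)^3}=\infty$ into the stated condition $\sum_n\frac{\delta_n}{\log n}\tau^{(\log^*n)^3}=\infty$. Without that condensation step and the $\log m_k$ gap estimate, the divergence hypothesis cannot be brought to bear, so step~(4) as you describe it does not close. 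Relatedly, your heuristic for the iterated-logarithm exponent is plausible but underdetermined: the paper also needs a ``restore derivative'' rule (discard an entire partition element whenever no parameter in it satisfies the stronger $(\gamma_B,C_B)$-CE bound at a return) to keep the $(\gamma_T,C_T)$-expansion, and hence the Tsujii comparison, valid throughout the induction --- this is an essential piece of bookkeeping absent from the proposal.
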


\subsection{Proof of Theorem A}
Using Theorem~\ref{Theorem_B}, Theorem~\ref{Theorem_A} is proved by a standard covering argument. Since $\omega_a$ is centred at $a$, so is the smaller interval $\omega_a' = \omega_a/5$. By Vitali covering lemma there exists a countable collection $(a_j)$ of PR-parameters such that
\[
\PR \subset \bigcup_{a \in \PR} \omega'_a \subset \bigcup_{j = 1}^\infty \omega_{a_j}.
\]
It now follows directly that
\[
\Leb(\Lambda_N(\delta_n) \cap \CE(\gamma,C) \cap \PR) \leq \sum_{j=1}^\infty \Leb(\Lambda_N(\delta_n) \cap \CE(\gamma,C) \cap \omega_{a_j}) = 0,
\]
and therefore
\begin{align*}
\Leb (\Lambda(\delta_n) \cap \CE \cap \PR) &\leq \sum_{N,k,l \geq 1} \Leb(\Lambda_N(\delta_n) \cap \CE(k^{-1}\log 2,l^{-1}) \cap \PR) \\ 
&= 0.
\end{align*}
Finally, we notice that $\Lambda(\delta_n) \cap \CE \subset \PR$; indeed this is clearly the case since $(\delta_n)$ is assumed to be admissible.
\begin{Rem}
With the introduction of the set $\PR$ we are avoiding the use of previous recurrence results (e.g. Avila--Moreira) in order to prove Theorem~\ref{Theorem_A}, by (a priori) allowing $\PR$ to be a set of measure zero. In either case, the statement of Theorem~\ref{Theorem_A} is true.
\end{Rem}

\subsection{Outline of proof of Theorem B}
The proof of Theorem B will rely on the classical parameter exclusion techniques developed by Benedicks and Carleson \cite{BC1,BC2}, complemented with more recent results. In particular we allow for perturbation around a parameter in more general position than $a=2$. In contrast to the usual application of these techniques, our goal here is the show that what remains after excluding parameters is a set of zero Lebesgue measure. One of the key points in our approach is the introduction of unbounded distortion estimates.

We will carefully study the returns of the critical orbit, simultaneously for maps corresponding to parameters in a suitable interval $\omega \subset [1,2]$, to a small and fixed interval $(-\delta,\delta) = (-e^{-\Delta},e^{-\Delta})$. These returns to $(-\delta,\delta)$ will be classified as either \emph{inessential}, \emph{essential}, \emph{escape}, or \emph{complete}. Per definition of a complete return, we return close enough to $x = 0$ to be able to remove a large portion of $(-\delta_n,\delta_n)$ in phase space. To estimate what is removed in parameter space, we need distortion estimates. This will be achieved by i) enforcing a $(\gamma,C)$-Collet--Eckmann condition, and ii) continuously making suitable partitions in phase space: $(-\delta,\delta)$ is subdivided into partition elements $I_r = (e^{-r-1},e^{-r})$ for $r >0$, and $I_r = -I_{-r}$ for $r < 0$. Furthermore, each $I_r$ is subdivided into $r^2$ smaller intervals $I_{rl} \subset I_r$, of equal length $\vert I_r\vert/r^2$. After partitioning, we consider iterations of each partition element individually, and the proof of Theorem~\ref{Theorem_B} will be one by induction.

We make a few comments on the summability condition appearing in the statement of Theorem~\ref{Theorem_A} and Theorem~\ref{Theorem_B}. In order to prove our result we need to estimate how much is removed at a complete return, but also how long time it takes from one complete return to the next. The factor $\tau^{(\log^* n)^3}$ is connected to the estimate of what is removed at complete returns, and more specifically it is connected to distortion; as will be seen, our distortion estimates are unbounded. The factor $(\log n)^{-1}$ is directly connected to the time between two complete returns: if $n$ is the index of a complete return, it will take $\lesssim \log n$ iterations until we reach the next complete return.

In the next section we prove a couple of preliminary lemmas, and confirm the existence of a suitable start-up interval $\omega_a$ centred at $a \in \PR$, for which the parameter exclusion will be carried out. After that, the induction step will be proved, and an estimate for the measure of $\Lambda_N(\delta_n) \cap \CE(\gamma, C) \cap \omega_a$ will be given.

\section{Preliminary Lemmas}

In this section we establish three important lemmas that will be used in the induction step. These are derived from Lemma~2.6, Lemma~2.10, and Lemma~3.1 in \cite{Aspenberg}, respectively, where they are proved in the more general setting of a complex rational map.

\subsection{Outside Expansion Lemma}
The first result we will need is the following version of the classical Ma\~n\'e Hyperbolicity Theorem (see \cite{Melo_Strien}, for instance).
\begin{Lemma}[Outside Expansion]\label{outside_expansion_lemma}
Given a Collet--Eckmann parameter $a_0$ there exist constants $\gamma_M,C_M > 0$ such that, for all $\delta > 0$ sufficiently small, there is a constant $\epsilon_M = \epsilon_M(\delta) > 0$ such that, for all $a \in (a_0 - \epsilon_M,a_0 + \epsilon_M)$, if
\[
x, F(x;a), F^2(x;a), \dots, F^{n-1}(x;a) \notin (-\delta,\delta),
\]
then
\[
\vert \partial_x F^n(x;a) \vert \geq \delta C_M e^{\gamma_M n}.
\]
Furthermore, if we also have that $F^n(x;a) \in (-2\delta,2\delta)$, then
\[
\vert \partial_x F^n(x;a) \vert \geq C_M e^{\gamma_M n}.
\]
\end{Lemma}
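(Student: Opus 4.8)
The plan is to derive this from the known Collet--Eckmann hypothesis at $a_0$ together with a classical Mañé-type hyperbolicity argument, proceeding in three stages: first at $a_0$ on the critical orbit, then at $a_0$ for arbitrary orbits staying outside $(-\delta,\delta)$, then perturbing to nearby parameters $a$. For the first stage, recall that $a_0$ being Collet--Eckmann means $|\partial_x F^n(1;a_0)| \geq C_0 e^{\gamma_0 n}$ for some $\gamma_0, C_0 > 0$; since $F(0;a_0) = 1$, this is exactly expansion along the forward critical orbit $1, F(1;a_0), F^2(1;a_0), \dots$. Mañé's theorem (see \cite{Melo_Strien}) says that for a $C^2$ interval map with non-degenerate critical point and no attracting or neutral cycles, any compact invariant set disjoint from a neighborhood of the critical point is uniformly hyperbolic (expanding). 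The set of points whose entire forward orbit avoids $(-\delta,\delta)$ is forward-invariant and bounded away from $0$; applying Mañé gives constants $\lambda = \lambda(\delta) > 1$ and $c = c(\delta) > 0$ with $|\partial_x F^n(x;a_0)| \geq c \lambda^n$ whenever $x, F(x;a_0), \dots, F^{n-1}(x;a_0) \notin (-\delta,\delta)$. The dependence of $c$ on $\delta$ is the usual one — a point making a single step could land near $0$ only at the last iterate, which is why the first conclusion carries a factor $\delta$ (reflecting $|F'| \asymp \delta$ near the edge of the excluded interval) while the second conclusion, which forbids even the terminal point from being too close to $0$... wait, actually the second conclusion \emph{allows} $F^n(x;a) \in (-2\delta,2\delta)$: the point is that one extra iterate from the $\delta$-annulus into the $2\delta$-annulus is controlled, so the bound improves by recovering the lost $\delta$ via bounded distortion on that last stretch. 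I would set $\gamma_M < \log \lambda$ and absorb constants accordingly.

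For the perturbation to $a \neq a_0$: the constants $\gamma_M, C_M$ should be fixed from the $a_0$ estimate (with a small margin), and then one chooses $\epsilon_M = \epsilon_M(\delta)$ small enough that the bound survives. The standard mechanism is that $F^n(x;a)$ and $\partial_x F^n(x;a)$ depend continuously (indeed real-analytically) on $(x,a)$, but the subtlety is uniformity in $n$. One handles this by a hyperbolic continuation / shadowing argument: the maximal invariant set $K_\delta(a_0)$ of points avoiding $(-\delta,\delta)$ is a hyperbolic set, hence persists under perturbation, and on it the expansion rate varies continuously; combined with a finite-time estimate (for $n$ up to some $n_0(\delta,\epsilon)$ one uses plain continuity, and the expansion factor $c\lambda^{n_0}$ is large enough to dominate the multiplicative error accumulated per step for the tail), one obtains $|\partial_x F^n(x;a)| \geq \delta C_M e^{\gamma_M n}$ for all $n$ and all $a$ within $\epsilon_M$ of $a_0$. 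Since all of this is explicitly carried out in \cite{Aspenberg} (Lemma 2.6) in the harder setting of complex rational maps, I would invoke that and merely indicate the specialization to the real quadratic family, where $I_a \subset [-1,1]$ is the relevant invariant interval and the only critical point is $0$.

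The main obstacle is the \textbf{uniformity of the hyperbolicity constants as $a$ ranges over $(a_0 - \epsilon_M, a_0 + \epsilon_M)$ with $n \to \infty$ simultaneously}: a naive step-by-step comparison of $\partial_x F^n(\cdot;a)$ with $\partial_x F^n(\cdot;a_0)$ loses a factor $(1 + O(\epsilon))$ per iterate and hence blows up exponentially in $n$, so one genuinely needs the structural stability of the hyperbolic set $K_\delta$ rather than soft continuity. A secondary technical point is tracking how $\epsilon_M$ and the implied constants degenerate as $\delta \to 0$ (the hyperbolicity of $K_\delta$ weakens and $K_\delta$ grows), which is why the statement only claims existence of $\epsilon_M(\delta)$ for each fixed small $\delta$ rather than any uniform version; getting the clean factor of $\delta$ versus $1$ in the two conclusions is then just bounded-distortion bookkeeping on the final iterate.
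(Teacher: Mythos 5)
Your proposal matches the paper's approach: the paper offers no proof of its own for this lemma, instead referring to Lemma 2.6 in \cite{Aspenberg} (with the specialization $q = 1/2$), and alternatively to Lemmas 4.1 and 5.1 in \cite{Tsujii_positive} with a small modification (taking $I_z$ to be a pullback component of $(-2\delta,2\delta)$ rather than $(-\delta,\delta)$). You correctly identify the key technical point — that a naive step-by-step comparison of $\partial_x F^n(\cdot;a)$ with $\partial_x F^n(\cdot;a_0)$ loses a multiplicative $1+O(\epsilon)$ per iterate and hence fails to be uniform in $n$, so one must exploit structural stability of the hyperbolic set avoiding $(-\delta,\delta)$ — and you ultimately defer to the same reference the paper does, so there is nothing substantive to compare beyond noting that your explanation of the $\delta$ vs.\ $\delta$-free constants in the two conclusions is a little loose (the recovery in the second case comes from the full pullback of $(-2\delta,2\delta)$ under $F^n$, not merely distortion on the terminal step).
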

A similar lemma for the quadratic family can be found in \cite{BBS} and \cite{Tsujii_positive}, for instance. The version stated here allows for $\delta$-independence at a more shallow return to the interval $(-2\delta,2\delta)$. To get this kind of annular result constitutes a minor modification of Lemma~4.1 in \cite{Tsujii_positive}. We refer to Lemma~2.6 in \cite{Aspenberg} and the proof therein, however, for a proof of the above result. This proof is based on Przytycki's \emph{telescope lemma} (see \cite{P4} and also \cite{PRS}). In contrast to the techniques in \cite{Tsujii_positive}, in the case of the quadratic family, no recurrence assumption is needed.

\subsection{Phase-parameter distortion}
If $t \mapsto F(x;a+t)$ is a family of (analytic) perturbations of $(x;a) \mapsto F(x;a)$ at $a$, we may expand each such perturbation as
\[
F(x;a+t) = F(x;a) + t\partial_aF(x;a) + \text{higher order terms},
\]
and it is easy to verify that 
\[
\frac{\partial_a F^n(x;a)}{\partial_x F^{n-1}(F(x;a);a)} = \frac{\partial_a F^{n-1}(x;a)}{\partial_x F^{n-2}(F(x;a);a)} + \frac{\partial_a F(F^{n-1}(x;a);a)}{\partial_x F^{n-1}(F(x;a);a)}.
\]
Our concern is with the quadratic family $x \mapsto 1-ax^2 = F(x;a)$, with $a$ being the parameter value. In particular we are interested in the critical orbit of each such member, and to this end we introduce the functions $a \mapsto \xi_j(a) = F^j(0;a)$, for $j \geq 0$. In view of our notation and the above relationship, we see that
\[
\frac{\partial_a F^n(0;a)}{\partial_x F^{n-1}(1;a)} = \sum_{k=0}^{n-1} \frac{\partial_aF(\xi_k(a);a)}{\partial_x F^k(1;a)}.
\]
Throughout the proof of Theorem~\ref{Theorem_B} it will be of importance to be able to compare phase and parameter derivatives. Under the assumption of exponential increase of the phase derivative along the critical orbit, this can be done, as is formulated in the following lemma. The proof is that of Lemma~2.10 in \cite{Aspenberg}.

\begin{Lemma}[Phase-Parameter Distortion]\label{tsujii_distortion}
Let $a_0$ be $(\gamma_0,C_0)$-Collet--Eckmann, $\gamma_T \in (0,\gamma_0)$, $C_T \in (0,C_0)$, and $A \in (0,1)$. There exist $T, N_T, \epsilon_T > 0$ such that if $a \in (a_0-\epsilon_T,a_0 + \epsilon_T)$ satisfies
$$
\vert \partial_x F^j(1;a) \vert \geq C_T e^{\gamma_T j} \qquad (j = 1,2,\dots, N_T,\dots n-1),
$$
for some $n-1 \geq N_T$, then
\[
(1-A)T \leq \left\vert \frac{\partial_a F^n(0;a)}{\partial_x F^{n-1} (1;a)} \right\vert \leq (1+A)T.
\]
\end{Lemma}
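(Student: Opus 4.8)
The idea is to prove the lemma by showing that the quotient
\[
Q_n(a) = \frac{\partial_a F^n(0;a)}{\partial_x F^{n-1}(1;a)} = \sum_{k=0}^{n-1} \frac{\partial_a F(\xi_k(a);a)}{\partial_x F^k(1;a)}
\]
is a Cauchy-type series whose tail is exponentially small, uniformly in $a$ on a small interval around $a_0$, and whose value at $a_0$ is a fixed nonzero number $T$; continuity and smallness of the perturbation then transfer the estimate from $a_0$ to nearby $a$. First I would record that $\partial_a F(x;a) = -x^2$, so the $k$-th term is $-\xi_k(a)^2 / \partial_x F^k(1;a)$, which has modulus at most $1/|\partial_x F^k(1;a)|$ since $|\xi_k| \le 1$ on the invariant interval. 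Under the hypothesis $|\partial_x F^j(1;a)| \ge C_T e^{\gamma_T j}$ for $j = 1,\dots,n-1$ (and using the $(\gamma_0,C_0)$-Collet--Eckmann property of $a_0$ itself for all $j$), the series converges geometrically, and the tail from index $m$ onward is bounded by $C_T^{-1} e^{-\gamma_T m}/(1 - e^{-\gamma_T})$.

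The main quantitative step is then the following. Define $T = |Q_\infty(a_0)| = \left| \sum_{k=0}^{\infty} \xi_k(a_0)^2 / \partial_x F^k(1;a_0) \right|$, which is a well-defined positive real number — positivity requires a separate remark, but the leading term $k=0$ contributes $\xi_0^2/1 = 1$ and the remaining terms are geometrically small, so for $\delta$ (equivalently for the Collet--Eckmann constant regime) chosen appropriately $T$ is bounded away from $0$; in any case $T > 0$ can be taken as the definition. Given $A \in (0,1)$, pick $N_T$ so large that the tails $\sum_{k \ge N_T}$ of both the series at $a_0$ and the series at any admissible $a$ are each less than, say, $\frac{A}{10} T$. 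Then pick $\epsilon_T$ so small that for $a \in (a_0 - \epsilon_T, a_0 + \epsilon_T)$ the finitely many head terms $\sum_{k=0}^{N_T - 1}$ of $Q(a)$ differ from those of $Q(a_0)$ by less than $\frac{A}{10}T$; this uses only continuity of $a \mapsto \xi_k(a)$ and $a \mapsto \partial_x F^k(1;a)$ for each fixed $k < N_T$, which is clear since these are polynomials in $a$. Combining, $|Q_n(a) - Q_\infty(a_0)| \le |Q_n(a) - Q_{N_T}(a)| + |Q_{N_T}(a) - Q_{N_T}(a_0)| + |Q_{N_T}(a_0) - Q_\infty(a_0)| < \frac{3A}{10}T < AT$ for all $n - 1 \ge N_T$, whence $(1-A)T \le |Q_n(a)| \le (1+A)T$.

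The step I expect to be the genuine obstacle — or at least the one requiring care rather than routine estimation — is controlling the head terms uniformly, because the bound $|\partial_x F^k(1;a)| \ge C_T e^{\gamma_T k}$ is only assumed for $k \ge N_T$ (through the index $k = 1, \dots, N_T, \dots, n-1$, so actually it is assumed for all $k$ in that range); the subtlety is rather that for small $k$ the phase derivative $\partial_x F^k(1;a)$ could a priori be small or the map could be near a parameter where it degenerates, so one must ensure $\epsilon_T$ is chosen after $N_T$ and small enough that none of the finitely many denominators $\partial_x F^k(1;a)$, $k < N_T$, come close to vanishing on the interval — again a finite, continuity-based statement, but the order of choosing constants ($\gamma_T, C_T, A$ given; then $T$; then $N_T$; then $\epsilon_T$) is what makes the argument work and must be respected. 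Since the lemma is quoted from Lemma 2.10 of \cite{Aspenberg} I would in the write-up simply refer there for the details, noting that the only inputs are the geometric decay of the series and polynomial (hence continuous) dependence of the critical orbit on the parameter.
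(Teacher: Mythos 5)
Your head-plus-tail decomposition is exactly the paper's argument: bound the tail of the series $\sum_k \partial_a F(\xi_k(a);a)/\partial_x F^k(1;a)$ geometrically using the Collet--Eckmann hypothesis, then use continuity of the finitely many head terms to pass from $a_0$ to nearby $a$, with the order of choices $N_T$-then-$\epsilon_T$ respected. That part is fine and matches the paper.

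The genuine gap is the claim that $T>0$. You argue that ``the leading term $k=0$ contributes $\xi_0^2/1 = 1$ and the remaining terms are geometrically small.'' Both halves fail. First, $\xi_0(a) = F^0(0;a) = 0$, so the $k=0$ term is $-\xi_0^2/\partial_x F^0(1;a) = 0$; the first nonzero term is at $k=1$ and equals $-\xi_1^2/\partial_x F(1;a) = -1/(-2a) = 1/(2a)$. Second, and more seriously, even granting a nonzero leading term of order $1$, the geometric tail bound you have is $\sum_{k\ge 2} C_0^{-1} e^{-\gamma_0 k} = C_0^{-1} e^{-2\gamma_0}/(1-e^{-\gamma_0})$, which is \emph{not} small when $C_0$ is small or $\gamma_0$ is close to $0$ --- and the lemma makes no smallness assumption on these constants. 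So you cannot rule out cancellation by this route, and the fallback ``in any case $T>0$ can be taken as the definition'' is not available: $T$ is forced to be the value of the convergent series at $a_0$, and if that value were $0$ the conclusion $(1-A)T \le |Q_n(a)| \le (1+A)T$ would collapse to $Q_n(a)=0$, which is neither true nor what the lemma is used for later (comparability of phase and parameter derivatives). The paper closes this gap by invoking a nontrivial transversality theorem --- Theorem 3 in Tsujii's paper (equivalently Theorem 1 of Levin) --- which asserts precisely that for a Collet--Eckmann parameter $a_0$ the limit $\lim_j \partial_a F^j(0;a_0)/\partial_x F^{j-1}(1;a_0)$ exists and is a strictly positive real number. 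That citation is the heart of the proof, not the continuity bookkeeping you flagged as the main obstacle (which, as you yourself note, is a finite, routine matter).
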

\begin{proof}
According to Theorem~3 in \cite{Tsujii_simple} (see also Theorem~1 in \cite{Levin})
\[
\lim_{j \to \infty} \frac{\partial_a F^j(0;a_0)}{\partial_x F^{j-1}(1;a_0)}  =  \sum_{k=0}^\infty \frac{\partial_a F(\xi_k(a_0);a_0)}{\partial_x F^k(1;a_0)}  = T \in \mathbb{R}_{>0}.
\]
Let $N_T > 0$ be large enough so that
\[
\left\vert \sum_{k=N_T}^\infty \frac{\partial_a F(\xi_k(a_0);a_0)}{\partial_x F^k(1;a_0)} \right\vert \leq \sum_{k=N_T}^\infty \frac{1}{C_0 e^{\gamma_0 k}} \leq \sum_{k=N_T}^\infty \frac{1}{C_T e^{\gamma_T k}} \leq \frac{1}{3}A T.
\]
Since $a \mapsto \partial_a F(\xi_k(a);a)/\partial_x F^k(1;a)$ is continuous there exists $\epsilon_T > 0$ such that given $a \in (a_0-\epsilon_T,a_0+\epsilon_T)$
\[
\left\vert \sum_{k=0}^{N_T-1} \frac{\partial_a F(\xi_k(a);a)}{\partial_x F(1;a)} - T \right\vert \leq \frac{1}{2}A T.
\]
Assuming $x \mapsto 1-ax^2$ to be $(\gamma_T,C_T)$-Collet--Eckmann up to time $n > N_T$, the result now follows since
\[
\left\vert \sum_{k=0}^n \frac{\partial_a F(\xi_k(a);a)}{\partial_x F^k(1;a)} - T \right\vert \leq AT.
\]
\end{proof}
\begin{Rem}
The quotient $(1+A)/(1-A) = D_A$ can be chosen arbitrarily close to $1$ by increasing $N_T$ and decreasing $\epsilon_T$.
\end{Rem}

\subsection{Start-up Lemma}
With the above two lemmas we now prove the existence of a suitable interval in parameter space on which the parameter exclusion will be carried out.

Given an admissible sequence $(\delta_n)$, let $N_A$ be the integer in Definition~\ref{admissible_definition}. Fix $N_B \geq 1, \gamma_B > 0$, and $C_B > 0$, and let $a_0$ be a PR-parameter satisfying a $(\gamma_0,C_0)$-Collet--Eckmann condition. In Lemma~\ref{tsujii_distortion} we make the choice 
\[
\gamma_T = \min(\gamma_B,\gamma_0,\gamma_M)/20  \quad \text{and} \quad C_T = \min(C_B,C_0)/3.
\]
Furthermore let
\[
\gamma = \min(\gamma_B,\gamma_0,\gamma_M)/2 \quad \text{and}\quad C = \min(C_B,C_0)/2,
\]
and let $m_{-1} = \max(N_A,N_B,N_T)$.

\begin{Lemma}[Start-up Lemma]\label{start-up_lemma}
There exist an interval $\omega_0 = (a_0-\epsilon,a_0 + \epsilon)$, an integer $m_0 \geq m_{-1}$, and a constant $S = \epsilon_1 \delta$ such that
\begin{enumerate}[(i)]
\item $\xi_{m_0} : \omega_0 \to [-1,1]$ is injective, and
\[
\vert \xi_{m_0}(\omega_0)\vert \geq \begin{cases} e^{-r}/r^2 &\mbox{if}\ \xi_{m_0}(\omega_0) \cap I_r \neq \emptyset, \\ S &\mbox{if}\ \xi_{m_0}\cap(-\delta,\delta) = \emptyset.\end{cases}
\]
\item Each $a \in \omega_0$ is $(\gamma,C)$-Collet--Eckmann up to time $m_0$:
\[
\vert \partial_x F^j(1;a) \vert \geq Ce^{\gamma j} \qquad (j=0,1,\dots,m_0-1).
\]
\item Each $a \in \omega_0$ enjoys polynomial recurrence up to time $m_0$: there exist absolute constants $K > 0$ and $\sigma \geq 0$ such that for $a \in \omega_0$
\[
\vert \xi_j(a) \vert \geq \frac{K}{j^\sigma} \qquad (j = 1,2,\dots,m_0-1).
\]

\end{enumerate}
\end{Lemma}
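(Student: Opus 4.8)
I would treat (ii), (iii) and the injectivity half of (i) as conditions on \emph{finitely many} iterates, to be secured by continuity of $a\mapsto\partial_x F^j(1;a)$ and $a\mapsto\xi_j(a)$ near $a_0$, and extract the size estimate in (i) from Lemma~\ref{tsujii_distortion}, which forces $\xi_{m_0}$ to stretch $\omega_0$ by a factor comparable to $|\partial_x F^{m_0-1}(1;\,\cdot\,)|$. Concretely: since Lemma~\ref{tsujii_distortion} is to be applied at time $m_0$ we need $m_0-1\ge N_T$, so set $m_0=m_{-1}+1$. Recall that $a_0$ is $(\gamma_0,C_0)$-Collet--Eckmann for all $n$ with $\gamma_0>\gamma$, $C_0>C$, and that $a_0\in\PR$ with some constants $K_0>0$, $\sigma_0\ge0$; put $K=K_0/2$, $\sigma=\sigma_0$, $\epsilon_1=1/\Delta^2$, $S=\epsilon_1\delta=e^{-\Delta}/\Delta^2$. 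The maps $a\mapsto\partial_x F^j(1;a)$ and $a\mapsto\xi_j(a)$ ($1\le j\le m_0$) are polynomials, the finitely many inequalities $|\partial_x F^j(1;a_0)|>Ce^{\gamma j}$ and $|\xi_j(a_0)|>K/j^\sigma$ are strict, and $\partial_a\xi_{m_0}(a_0)\ne0$ by the Tsujii--Levin identity from the proof of Lemma~\ref{tsujii_distortion} (here $m_0-1\ge N_T$ and $T>0$). Hence there is $\epsilon^\ast>0$ so that for every $\epsilon\le\epsilon^\ast$, with $\omega_0=(a_0-\epsilon,a_0+\epsilon)$, properties (ii) and (iii) hold and $\xi_{m_0}|_{\omega_0}$ is a diffeomorphism onto its image — the injectivity in (i).

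For the size of the image: each $a\in\omega_0$ is now $(\gamma,C)$-Collet--Eckmann up to time $m_0$, and since $\gamma>\gamma_T$ and $C>C_T$ (the latter because $\delta$ is small), Lemma~\ref{tsujii_distortion} gives $(1-A)T|\partial_x F^{m_0-1}(1;a)|\le|\partial_a\xi_{m_0}(a)|\le(1+A)T|\partial_x F^{m_0-1}(1;a)|$; with the mean value theorem and bounded distortion of $\xi_{m_0}$ on $\omega_0$ (discussed below) this yields $|\xi_{m_0}(\omega_0)|\asymp T|\partial_x F^{m_0-1}(1;a_0)|\cdot\epsilon$. As $\epsilon$ ranges over $(0,\epsilon^\ast]$ this quantity varies continuously and monotonically and tends to $0$ with $\epsilon$, so we may fix $\epsilon\le\epsilon^\ast$ with $|\xi_{m_0}(\omega_0)|$ of the order of $S$. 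Because $\xi_{m_0}(a_0)\in\xi_{m_0}(\omega_0)$ has $|\xi_{m_0}(a_0)|\ge K_0/m_0^{\sigma_0}$, and $m_0$ grows only like $\log(1/\delta)$ so that $K_0/m_0^{\sigma_0}\gg e^{-\Delta}=\delta$ while $|\xi_{m_0}(\omega_0)|\asymp S\ll\delta$, the interval $\xi_{m_0}(\omega_0)$ lies entirely outside $(-\delta,\delta)$; thus the second alternative of (i) is the relevant one and it holds by construction. (Had $\xi_{m_0}(\omega_0)$ met some $I_r$, then necessarily $r\ge\Delta$ and $|\xi_{m_0}(\omega_0)|\ge S=e^{-\Delta}/\Delta^2\ge e^{-r}/r^2$ would give the first alternative.)

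Two points keep this from being a soft continuity argument, and $a_0\in\PR$ resolves both. First, $m_0$ is not a universal constant: because $C_T$ is proportional to $\delta$, the threshold $N_T$ of Lemma~\ref{tsujii_distortion}, and with it $m_{-1}$ and $m_0$, grows like $\log(1/\delta)$; so $\epsilon^\ast$ depends on $\delta$ and one must verify it does not decay faster than $S/\bigl(T|\partial_x F^{m_0-1}(1;a_0)|\bigr)$, so that the window for $\epsilon$ in the previous paragraph is nonempty. Second, the bounded distortion of $\xi_{m_0}|_{\omega_0}$ needs to be proved. Both follow from one estimate: the critical orbit of $a_0$ stays at distance $\ge K_0/j^{\sigma_0}$ from $0$ — only \emph{polynomially} small, hence vastly larger than the exponentially small $\delta$ — while $\xi_j(\omega_0)$ shadows it with $|\xi_j(\omega_0)|\lesssim|\xi_{m_0}(\omega_0)|\asymp S$ for $j\le m_0$ (by the Collet--Eckmann–type expansion along the orbit of $a_0$), so the distortion sum $\sum_{j<m_0}|\xi_j(\omega_0)|\big/\operatorname{dist}\bigl(\xi_j(\omega_0),0\bigr)\lesssim m_0^{\sigma_0}S/K_0\propto\delta$ is small. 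This gives bounded distortion of $\xi_j|_{\omega_0}$ for all $j\le m_0$, which controls the variation of $\partial_x F^j(1;\,\cdot\,)$ over $\omega_0$ finely enough to pin down $\epsilon^\ast$ and to close the bootstrap between ``$\omega_0$ is Collet--Eckmann up to $m_0$'' and the distortion bound. Making this precise — balancing the $\delta$-dependence of $N_T$ against a distortion estimate powered by polynomial recurrence — is the main obstacle, and it is exactly the content of Lemma~3.1 of \cite{Aspenberg}, whose argument I would transcribe while carrying the constants $(\gamma,C)$ and $(K,\sigma)$ explicitly.
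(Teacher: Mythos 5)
Your proposal diverges from the paper's proof at a structurally important point, and this opens a gap that you flag at the end but do not close. The paper does not fix $m_0$ in advance. Instead it first shrinks $\epsilon$ so that the distance condition~(\ref{dist_condition}) — $\vert\xi_j(\omega_0)\vert\le e^{-r}/r^2$ when $\xi_j(\omega_0)$ meets $I_r$, and $\vert\xi_j(\omega_0)\vert\le S$ when the image lies outside $(-\delta,\delta)$ — is satisfied for all $j\le m_{-1}$. While (\ref{dist_condition}) holds, the single-iterate derivative comparison (\ref{phaseDist}) propagates the $(\gamma_0,C_0)$-Collet--Eckmann property of $a_0$ to every $a\in\omega_0$, yielding (ii); the same condition combined with $\PR$ for $a_0$ yields (iii); and via Lemma~\ref{tsujii_distortion} the image $\xi_j(\omega_0)$ grows exponentially. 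The paper then defines $m_0\ge m_{-1}$ to be the \emph{first} $j$ at which (\ref{dist_condition}) fails. With this definition, part (i) is automatic from the failure of (\ref{dist_condition}) at time $m_0$ — in either alternative — and no separate argument is needed to decide which alternative occurs.

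By instead freezing $m_0=m_{-1}+1$ and then trying to tune $\epsilon$ so that $\vert\xi_{m_0}(\omega_0)\vert\asymp S$, you create the tension you name as ``the main obstacle'': the continuity/distortion constraints behind (ii), (iii), and injectivity force $\epsilon\le\epsilon^*$, while achieving $\vert\xi_{m_0}(\omega_0)\vert\ge S$ at your fixed $m_0$ forces $\epsilon\gtrsim S/\bigl(T\vert\partial_xF^{m_0-1}(1;a_0)\vert\bigr)$, and the compatibility of these two requirements is exactly what must be proved, not assumed. Your distortion estimate $\sum_{j<m_0}\vert\xi_j(\omega_0)\vert/\operatorname{dist}(\xi_j(\omega_0),0)\lesssim m_0^{\sigma_0}S/K_0$ already presupposes $\vert\xi_j(\omega_0)\vert\lesssim S$ for $j<m_0$, which is (\ref{dist_condition}) again — this is the circularity that the paper's ``let $m_0$ float'' device dissolves for free: for \emph{any} sufficiently small $\epsilon$, (\ref{dist_condition}) holds up to $m_{-1}$, and exponential growth guarantees it eventually fails; the first such time \emph{is} $m_0$ and hands you (i). You also add an unnecessary argument (comparing $K_0/m_0^{\sigma_0}$ to $\delta$) to force the second alternative of (i), which only works because you fixed $m_0\sim\Delta$; the paper's $m_0$ can be larger and its argument covers both alternatives uniformly. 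One further slip: in the paper $\epsilon_1$ is a small fixed constant and $\delta$ is chosen so that $1/\Delta^2<\epsilon_1$ (Remark~\ref{large_scale}); setting $\epsilon_1=1/\Delta^2$ is not the intended construction.
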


\begin{proof} 
Given $x,y \in \xi_{j}(\omega_0)$, $j \geq 1$, consider the following distance condition
\begin{equation}\label{dist_condition}
\vert x - y \vert \leq \begin{cases} e^{-r}/r^2 &\mbox{if}\ \xi_{j}(\omega_0) \cap I_r \neq \emptyset, \\ S = \epsilon_1 \delta &\mbox{if}\ \xi_{j}(\omega_0) \cap (-\delta,\delta) = \emptyset. \end{cases}
\end{equation}
By making $\epsilon$ smaller, we may assume that (\ref{dist_condition}) is satisfied up to time $m_{-1}$. Moreover, we make sure that $\epsilon$ is small enough to comply with Lemma~\ref{tsujii_distortion}. Whenever (\ref{dist_condition}) is satisfied, phase derivatives are comparable as follows
\begin{equation}\label{phaseDist}
\frac{1}{C_1} \leq \left\vert \frac{\partial_x F(x;a)}{\partial_x F(y;b)} \right\vert \leq C_1,
\end{equation}
with $C_1 > 1$ a constant. This can be seen through the following estimate
\[
\left\vert \frac{\partial_x F(x;a)}{\partial_x F(y;b)}\right\vert = \left\vert \frac{-2ax}{-2by} \right\vert \leq \frac{a_0 + \epsilon}{a_0-\epsilon}\left(\left\vert \frac{x-y}{y} \right\vert + 1\right).
\]
If we are outside $(-\delta,\delta)$ then
\[
\left\vert \frac{x-y}{y} \right\vert \leq \frac{S}{\delta} = \epsilon_1,
\]
and if we are hitting $I_r$ with largest possible $r$,
\[
\left\vert \frac{x-y}{y} \right\vert \leq \frac{e^{-r}}{r^2}\frac{1}{e^{-(r+1)}} = \frac{e}{r^2} \leq \frac{e}{\Delta^2}.
\]
By making sure that $\epsilon$, $\epsilon_1$, and $\delta$ are small enough, $C_1$ can be made as close to $1$ as we want. In particular, we make $C_1$ close enough to $1$ so that
\begin{equation}\label{CEafterN}
C_1^{-j}C_0 e^{\gamma_0 j} \geq C e^{\gamma j} \qquad (j \geq 0).
\end{equation}

As long as the distance condition (\ref{dist_condition}) is satisfied, we will have good expansion along the critical orbits. Indeed by (\ref{phaseDist}) and (\ref{CEafterN}) it follows that, given $a \in \omega_0$,
\begin{align*}
\vert \partial_x F^j(1;a) \vert &\geq C_1^{-j}\vert \partial_x F^j(1;a_0) \vert \\
&\geq C_1^{-j}C_0 e^{\gamma_0 j} \\
&\geq C e^{\gamma j} \qquad (j \geq 0\ \text{such that}\ (2)\ \text{is satisfied}).
\end{align*}
This tells us that, during the time for which (\ref{dist_condition}) is satisfied, each $a \in \omega_0$ is $(\gamma,C)$-Collet--Eckmann. In particular, since $\gamma > \gamma_T$ and $C > C_T$, we can apply Lemma~\ref{tsujii_distortion}, and together with the mean value theorem we have that
\begin{align*}
\vert \xi_{j}(\omega_0)\vert &= \vert \partial_a F^j(0;a') \vert \vert \omega_0 \vert \\
&\geq (1-A)T \vert \partial_x F^{j-1}(1;a') \vert \vert \omega_0\vert \\
&\geq (1-A)T C e^{\gamma (j-1)} \vert \omega_0 \vert.
\end{align*}
Our interval is thus expanding, and we let $m_0 = j$, with $j \geq m_{-1}$ the smallest integer for which (\ref{dist_condition}) is no longer satisfied. This proves statements (i) and (ii).

To prove statement (iii), let $K_0 > 0$ and $\sigma_0 \geq 0$ be the constants associated to $a_0$ for which
\[
\vert \xi_j(a_0) \vert \geq \frac{K_0}{j^{\sigma_0}} \qquad (j \geq 1).
\]
In view of (\ref{dist_condition}), when we hit $(-\delta,\delta)$ at some time $j < m_0$,
\[
\vert \xi_j(a) \vert \geq \vert \xi_j(a_0) \vert - \vert \xi_j(\omega_0) \vert \geq \vert \xi_j(a_0) \vert - \frac{e^{-r}}{r^2}.
\]
Here, $r$ is such that
\[
e^{-r-1} \leq \vert \xi_j(a_0) \vert,
\]
and therefore, given $\delta$ small enough,
\[
\vert \xi_j(a) \vert \geq \vert \xi_j(a_0) \vert \left(1 - \frac{e}{\Delta^2} \right) \geq \frac{K_0/2}{j^{\sigma_0}} \qquad (j = 1,2,\dots,m_0-1).
\]
\end{proof}
\begin{Rem}\label{large_scale}
By making $\delta$ small enough so that $1/\Delta^2 < \epsilon_1$, $S$ will be larger than any partition element $I_{rl} \subset (-\delta,\delta)$. This $S$ is usually referred to as the \emph{large scale}.
\end{Rem}
Since $\Lambda_{N_B}(\delta_n) \subset \Lambda_{m_0}(\delta_n)$, Theorem~\ref{Theorem_B} follows if
\[
\Leb\left(\Lambda_{m_0}(\delta_n) \cap \CE(\gamma_B,C_B) \cap \omega_0\right) = 0.
\]

\section{Induction Step}

\subsection{Initial iterates}
Let $\omega_0 = \Delta_0$ be the start-up interval obtained in Lemma \ref{start-up_lemma}. Iterating this interval under $\xi$ and successively excluding parameters that do not satisfy the recurrence condition, or the Collet--Eckmann condition, we will inductively define a nested sequence $\Delta_0 \supset \Delta_1 \supset \dots \supset \Delta_k \supset \cdots$ of sets of parameters satisfying
\[
\Lambda_{m_0}(\delta_n) \cap \CE(\gamma_B,C_B) \cap \omega_0 \subset \Delta_\infty = \bigcap_{k = 0}^\infty \Delta_k,
\]
and our goal is to estimate the Lebesgue measure of $\Delta_\infty$. This will require a careful analysis of the so-called \emph{returns} to $(-\delta,\delta)$, and we will distinguish between four types of returns: \emph{inessential}, \emph{essential}, \emph{escape}, and \emph{complete}. At the $k^\text{th}$ complete return, we will be in the position of excluding parameters and form the partition that will make up the set $\Delta_k$. Below we will describe the iterations from the $k^\text{th}$ complete return to the $(k+1)^\text{th}$ complete return, hence the forming of $\Delta_{k+1}$. Before indicating the partition, and giving a definition of the different returns, we begin with considering the first initial iterates of $\xi_{m_0}(\omega_0)$.

If $\xi_{m_0}(\omega_0) \cap (-\delta,\delta) \neq \emptyset$, then we have reached a return and we proceed accordingly as is described below. If this is not the case, then we are in the situation
\[
\xi_{m_0}(\omega_0) \cap (-\delta,\delta) = \emptyset \quad \text{and}\quad \vert \xi_{m_0}(\omega_0) \vert \geq S,
\]
with $S$ larger than any partition element $I_{rl} \subset (-\delta,\delta)$ (see Remark~\ref{large_scale}). Since the length of the image is bounded from below, there is an integer $n^* = n^*(S)$ such that for some smallest $n \leq n^*$ we have
\[
\xi_{m_0+n}(\omega_0) \cap (-\delta,\delta) \neq \emptyset.
\]
In this case, $m_0 + n$ is the index of the first return. We claim that, if $m_0$ is large enough, we can assume good derivative up to time $m_0 + n$. To realise this, consider for $j < n$ the distortion quotient
\[
\left\vert\frac{\partial_x F^{m_0+j}(1;a)}{\partial_x F^{m_0+j}(1;b)} \right\vert = \left\vert\frac{\partial_x F^{m_0-1}(1;a)}{\partial_x F^{m_0-1}(1;b)} \right\vert \left\vert\frac{\partial_x F^{j+1}(\xi_{m_0}(a);a)}{\partial_x F^{j+1}(\xi_{m_0}(b);b)} \right\vert.
\]
Since the distance conditions (\ref{dist_condition}) are satisfied up to time $m_0 - 1$, the first factor in the above right hand side is bounded from above by the constant $C_1^{m_0-1}$, with $C_1 > 1$ being very close to $1$ (see (\ref{phaseDist})). Furthermore, since $j < n < n^*(S)$, and since we by assumption are iterating outside $(-\delta,\delta)$, the second factor in the above right hand side is bounded from above by some positive constant $C_{S,\delta}$ dependent on $S$ and $\delta$.

If there is no parameter $a' \in \omega_0$ such that $\vert \partial_x F^{m_0+j}(1;a') \vert \geq C_B e^{\gamma_B(m_0 + j)}$ then we have already reached our desired result. If on the other hand there is such a parameter $a'$ then for all $a \in \omega_0$ it follows from the above distortion estimate and our choice of $\gamma$ that
\[
\vert \partial_x F^{m_0+j}(1;a) \vert \geq \frac{C_B e^{\gamma_B (m_0+j)}}{C_1^{m_0-1}C_{S,\delta}} \geq Ce^{\gamma(m_0+j)},
\]
provided $m_0$ is large enough. We conclude that
\begin{equation}\label{initial_derivative}
\vert \partial_x F^{j}(1;a) \vert \geq C e^{\gamma j} \qquad (a \in \omega_0,\ j = 0,1,\dots,m_0+n-1).
\end{equation}

In the case we have to iterate $\xi_{m_0}(\omega_0)$ further to hit $(-\delta,\delta)$ we still let $m_0$ denote the index of the first return.

\subsection{The partition}
At the $(k+1)^\text{th}$ step in our process of excluding parameters, $\Delta_k$ consists of disjoint intervals $\omega_k^{rl}$, and for each such interval there is an associated time $m_k^{rl}$ for which either $\xi_{m_k^{rl}}(\omega_k^{rl}) = I_{rl} \subset (-4\delta,4\delta)$, or $\xi_{m_k^{rl}}(\omega_k^{rl})$ is mapped onto $\pm(\delta,x)$, with $\vert x-\delta \vert \geq 3\delta$. We iterate each such interval individually, and let $m_{k+1}^{rl}$ be the time for which $\xi_{m_{k+1}^{rl}}(\omega_k^{rl})$ hits deep enough for us to be able to remove a significant portion of $(-\delta_{m_{k+1}^{rl}},\delta_{m_{k+1}^{rl}})$ in phase space, and let $E_k^{rl}$ denote the corresponding set that is removed in parameter space. We now form the set $\hat{\omega}_k^{rl} \subset \Delta_{k+1}$ and make the partition
\[
\hat{\omega}_k^{rl} = \omega_k^{rl} \smallsetminus E_k^{rl} = \left( \bigcup_{r',l'} \omega_{k+1}^{r'l'} \right) \cup T_{k+1} = N_{k+1} \cup T_{k+1}.
\]
Here, each $\omega_{k+1}^{r'l'} \subset N_{k+1}$ is such that $\xi_{m_{k+1}^{rl}}(\omega_{k+1}^{r'l'}) = I_{r'l'} \subset (-4\delta,4\delta)$, and $T_{k+1}$ consists of (at most) two intervals whose image under $\xi_{m_{k+1}^{rl}}$ is $\pm (\delta,x)$, with $\vert x-\delta \vert \geq 3 \delta$.
\begin{Rem}
At most four intervals $\omega_{k+1}^{r'l'} \subset N_{k+1}$ will be mapped onto an interval slightly larger than $I_{r'l'}$, i.e.
\[
I_{r'l'} \subset \xi_{m_{k+1}^{rl}}(\omega_{k+1}^{r'l'}) \subset I_{r'l'}\cup I_{r''l''},
\]
with $I_{r'l'}$ and $I_{r''l''}$ adjacent partition elements.
\end{Rem}
\begin{Rem}
At \emph{essential returns} and \emph{escape returns} we will, if possible, make a partial partition. To these partitioned parameter intervals we associate a \emph{complete return time} even though nothing is removed at these times. This is described in more detail in sections 4.8 and 4.9.
\end{Rem}
\begin{Rem}
Notice that our way of partitioning differs slightly from the original one considered in \cite{BC1}, since here we do not continue to iterate what is mapped outside of $(-\delta,\delta)$, but instead stop and make a partition.
\end{Rem}

\subsection{The different returns to \texorpdfstring{$(-\delta,\delta)$}{(-delta,delta)}}
At time $m_{k+1}^{rl}$ we say that $\omega_k^{rl}$ has reached the $(k+1)^\text{th}$ \emph{complete return} to $(-\delta,\delta)$. In between the two complete returns of index $m_{k}^{rl}$ and $m_{k+1}^{rl}$ we might have returns which are not complete. Given a return at time $n > m_k^{rl}$, we classify it as follows.
\begin{enumerate}[i)]
\item If $\xi_n(\omega_k^{rl}) \subset I_{r'l'} \cup I_{r''l''}$, with $I_{r'l'}$ and $I_{r''l''}$ adjacent partition elements ($r' \geq r''$), and if $\vert \xi_n(\omega_k^{rl}) \vert < \vert I_{r'l'}\vert$, we call this an \emph{inessential return}. The interval $I_{r'l'}\cup I_{r''l''}$ is called the \emph{host interval}.
\item If the return is not inessential, it is called an \emph{essential return}. The outer most partition element $I_r$ contained in the image is called the \emph{essential interval}.
\item If $\xi_n(\omega_k^{rl}) \cap (-\delta,\delta) \neq \emptyset$ and $\vert \xi_n(\omega_k^{rl}) \smallsetminus (-\delta,\delta) \vert \geq 3\delta$, we call this an \emph{escape return}. The interval $\xi_n(\omega_k^{rl}) \smallsetminus (-\delta,\delta)$ is called the \emph{escape interval}.
\item Finally, if a return satisfies $\xi_n(\omega_k^{rl}) \cap (-\delta_n/3,\delta_n/3) \neq \emptyset$, it is called a \emph{complete return}.
\end{enumerate}
We use these terms exclusively, that is, an inessential return is not essential, an essential return is not an escape, and an escape return is not complete.

Given $\omega_k^{rl} \subset \Delta_k$ we want to find an upper bound for the index of the next complete return. In the worst case scenario we encounter all of the above kind of returns, in the order
\[
\text{complete} \to \text{inessential} \to \text{essential} \to \text{escape} \to \text{complete}.
\]
Given such behaviour, we show below that there is an absolute constant $\kappa>0$ such that the index of the $(k+1)^{\text{th}}$ complete return satisfies $m_{k+1}^{rl} \leq m_k^{rl} + \kappa \log m_k^{rl}$.

\subsection{Induction assumptions}
Up until the start time $m_0$ we do not want to assume anything regarding recurrence with respect to our recurrence rate $(\delta_n)$. Since the perturbation is made around a PR-parameter $a_0$, we do however have the following \emph{polynomial recurrence} to rely on (Lemma~\ref{start-up_lemma}):
\begin{enumerate}
\item[(PR)] $\vert F^j(0;a) \vert \geq K/j^\sigma$ for all $a \in \omega^{rl}_k$ and $j = 1,2,\dots,m_0-1$.
\end{enumerate}
After $m_0$ we start excluding parameters according to the following \emph{basic assumption}:
\begin{enumerate}
\item[(BA)] $\vert F^j(0;a) \vert \geq \delta_j/3$ for all $a \in \omega_k^{rl}$ and $j = m_0,m_0+1,\dots,m_k^{rl}$.
\end{enumerate}
Since our sequence $\delta_j$ is assumed to be admissible, we will frequently use the fact that $\delta_j/3 \geq 1/(3j^{\overline{e}})$.

From (\ref{initial_derivative}) we know that every $a \in \omega^{rl}_k$ is $(\gamma,C)$-Collet--Eckmann up to time $m_0$, and this condition is strong enough to ensure phase-parameter distortion (Lemma~\ref{tsujii_distortion}). We will continue to assume this condition at complete returns, but in between two complete returns we will allow the exponent to drop slightly due to the loss of derivative when returning close to the critical point $x = 0$. We define the \emph{basic exponent} conditions as follows:
\begin{enumerate}
\item[(BE)(1)] $\vert \partial_x F^{m_k^{rl}-1}(1;a) \vert \geq C e^{\gamma (m_k^{rl}-1)}$ for all $a \in \omega^{rl}_k $.

\item[(BE)(2)] $\vert \partial_x F^j(1;a) \vert \geq C e^{(\gamma/3) j}$ for all $a \in \omega^{rl}_k$ and $j = 0,1,\dots,m_k^{rl}-1$.
\end{enumerate}

Assuming (BA) and (BE)(1,2) for $a \in \omega_k^{rl} \subset \Delta_k$, we will prove it for $a' \in \omega_{k+1}^{r'l'} \subset \Delta_{k+1} \subset \Delta_k$. Before considering the iteration of $\omega_k^{rl}$, we define the \emph{bound period} and the \emph{free period}, and prove  some useful lemmas connected to them. For technical reasons these lemmas will be proved using the following weaker assumption on the derivative. Given a time $n \geq m^{rl}_k$ we consider the following condition:
\begin{enumerate}
\item[(BE)(3)] $\vert \partial_x F^j(1;a) \vert \geq C e^{(\gamma/9)j}$ for all $a \in \omega_k^{rl}$ and $j = 0,1,\dots,n-1$.
\end{enumerate}
Notice that $\gamma/9 > \gamma_T$, hence we will be able to apply Lemma~\ref{tsujii_distortion} at all times.

To rid ourselves of cumbersome notation we drop the indices from this point on and write $\omega = \omega_k^{rl}$, and $m = m_k^{rl}$.

\subsection{The bound and free periods}
Assuming we are in the situation of a return for which $\xi_n(\omega) \subset I_{r+1} \cup I_r \cup I_{r-1} \subset (-4\delta,4\delta)$, we are relatively close to the critical point, and therefore the next iterates $\xi_{n+j}(\omega)$ will closely resemble those of $\xi_j(\omega)$. We quantify this and define the \emph{bound period} associated to this return as the maximal $p$ such that
\begin{enumerate}
\item[(BC)] $\vert \xi_\nu(a) - F^\nu(\eta;a) \vert \leq \vert \xi_\nu(a) \vert/(10\nu^2)$ for $\nu = 1,2,\dots,p$
\end{enumerate}
holds for all $a \in \omega$, and all $\eta \in (0,e^{-\vert r-1\vert})$. We refer to (BC) as the \emph{binding condition}.
\begin{Rem}\label{pointwise_binding}
In the proof of Lemma~\ref{window_lemma} we will refer to \emph{pointwise binding}, meaning that for a given parameter $a$ we associate a bound period $p = p(a)$ according to when (BC) breaks for this specific parameter. We notice that the conclusions of Lemma~\ref{bound_distortion} and Lemma~\ref{bound_period_lemma} below are still true if we only consider iterations of one specific parameter.
\end{Rem}

The bound period is of central importance, and we establish some results connected to it (compare with \cite{BC1}). An important fact is that during this period the derivatives are comparable in the following sense.
\begin{Lemma}[Bound distortion]\label{bound_distortion}
Let $n$ be the index of a return for which $\xi_n(\omega) \subset I_{r+1} \cup I_r \cup I_{r-1}$, and let $p$ be the bound period. Then, for all $a \in \omega$ and $\eta \in (0,e^{-\vert r-1\vert})$,
\[
\frac{1}{2} \leq \left\vert \frac{\partial_x F^j(1-a\eta^2;a)}{\partial_xF^j(1;a)} \right\vert \leq 2 \qquad (j = 1,2,\dots,p).
\]
\end{Lemma}
\begin{proof}
It is enough to prove that
\begin{equation}\label{b_dist}
\left \vert \frac{\partial_x F^j(1-a\eta^2;a)}{\partial_x F^j(1;a)} - 1 \right\vert \leq \frac{1}{2}.
\end{equation}
The quotient can be expressed as
\[
\frac{\partial_x F^j(1-a\eta^2;a)}{\partial_x F^j(1;a)} = \prod_{\nu=1}^j \left( \frac{F^\nu(\eta;a) - \xi_\nu(a)}{\xi_\nu(a)} + 1\right),
\]
and applying the elementary inequality
\[
\left\vert \prod_{\nu = 1}^j (u_n + 1) - 1\right\vert \leq \exp \left(\sum_{\nu=1}^j \vert u_n \vert \right) -1,
\]
valid for complex $u_n$, (\ref{b_dist}) now follows since
\[
\sum_{\nu = 1}^j \frac{\vert F^\nu(\eta;a) - \xi_\nu(a)\vert}{\vert \xi_\nu(a) \vert} \leq \frac{1}{10}\sum_{\nu=1}^j \frac{1}{\nu^2} \leq \log \frac{3}{2}.
\]
\end{proof}
The next result gives us an estimate of the length of the bound period. As will be seen, if (BA) and (BE)(3) are assumed up to time $n \geq m = m_k^{rl}$, the bound period is never longer than $n$, and we are therefore allowed to use the induction assumptions during this period. In particular, in view of the above distortion result and (BE)(3), we inherit expansion along the critical orbit during the bound period; making sure $m_0$ is large enough, and using (BA) together with the assumption that $(\delta_n)$ is admissible, we have
\begin{align}\label{derivative_after_bound_period}
\vert \partial_x F^{n + j}(1;a) \vert &= 2a \vert \xi_n(a) \vert \vert \partial_x F^{n-1}(1;a) \vert \vert \partial_x F^j(1-a\xi_n(a)^2;a) \vert \nonumber \\
&\geq \frac{2}{3n^{\overline{e}}} C^2e^{(\gamma/9)(n+j-1)} \nonumber \\
&= \frac{2}{3}C^2 e^{-\gamma/9} \exp\left\{ \left(\frac{\gamma}{9} - \frac{\overline{e}\log n}{n+j}\right)(n+j)\right\} \nonumber \\
&\geq C_T e^{\gamma_T (n+j)} \qquad (j = 0,1,\dots,p).
\end{align}
This above estimate is an a priori one, and will allow us to use Lemma~\ref{tsujii_distortion} in the proof of Lemma~\ref{free_period_lemma}.

\begin{Lemma}[Bound Length]\label{bound_period_lemma}
Let $n$ be the index of a return such that $\xi_n(\omega) \subset I_{r+1} \cup I_r \cup I_{r-1}$, and suppose that (BA) and (BE)(3) are satisfied up to time $n$. Then there exists a constant $\kappa_1 > 0$ such that the corresponding bound period satisfy
\begin{equation}\label{bound_period}
\kappa_1^{-1}r \leq p \leq \kappa_1 r.
\end{equation}
\end{Lemma}
\begin{proof}
By the mean value theorem and Lemma \ref{bound_distortion} we have that
\begin{align}\label{bound_estimate}
\vert \xi_j(a) - F^j(\eta;a) \vert &= \vert F^{j - 1}(1;a) - F^{j-1}(1-a\eta^2;a) \vert \nonumber \\
&= a\eta^2 \vert \partial_x F^{j-1}(1-a\eta'^2;a)\vert \\
&\geq \frac{a\eta^2}{2} \vert \partial_x F^{j-1}(1;a) \vert, \nonumber
\end{align}
as long as $j \leq p$. (Here, $0 < \eta' < \eta$.) Furthermore, as long as we also have $j \leq (\log n)^2$, say, we can use the induction assumptions: using (BE)(3) we find that
\[
\frac{1}{2} e^{-2(r+1)} C e^{(\gamma/9)(j-1)} \leq \frac{a\eta^2}{2} \vert \partial_x F^{j-1}(1;a)\vert \leq \frac{\vert \xi_j(a)\vert}{10j^2} \leq 1.
\]
Taking the logarithm, using (BA), and making sure that $m_0$ is large enough, we therefore have
\[
j \leq 1 + \frac{9}{\gamma}\left(2r + 2 + \log 2 - \log C\right) \lesssim r \lesssim \log n \leq (\log n)^2,
\]
as long as $j \leq p$ \emph{and} $j \leq (\log n)^2$. This tells us that $j \leq p$ must break before $j \leq (\log n)^2$; in particular there is a constant $\kappa_1 > 0$ such that $p \leq \kappa_1 r$.

For the lower bound consider $j = p+1$ and the equality (\ref{bound_estimate}). With $a \in \omega$ being the parameter for which the inequality in the binding condition is reversed, using Lemma \ref{bound_distortion} we find that
\[
\frac{\vert \xi_{p+1}(a)\vert}{10(p+1)^2} \leq \vert \xi_{p+1}(a) - F^{p+1}(\eta;a)\vert \leq 4e^{-2r}\vert \partial_x F^p(1;a) \vert \leq 4e^{-2r} 4^p.
\]
Using the upper bound for $p$ we know that (BA) (or (PR)) is valid at time $p+1$, hence
\[
\frac{\vert \xi_{p+1}(a) \vert}{10(p+1)^2} \geq \frac{1}{30(p+1)^{2+\hat{e}}},
\]
where $\hat{e} = \max(\overline{e},\sigma)$. Therefore
\[
\frac{1}{30(p+1)^{2 + \hat{e}}} \leq 4 e^{-2r}4^p,
\]
and taking the logarithm proves the lower bound.

\end{proof}
\begin{Rem}\label{lower_bound_bound_period_remark}
Notice that the lower bound is true without assuming the upper bound (which in our proof requires (BE)(3) at time $n$) as long as we assume (BA) to hold at time $p+1$.
\end{Rem}

The next result will concern the growth of $\xi_n(\omega)$ during the bound period.
 
\begin{Lemma}[Bound Growth]\label{bound_growth_lemma}
Let $n$ be the index of a return such that $\xi_n(\omega) \subset I$ with $I_{rl} \subset I \subset I_{r+1} \cup I_r \cup I_{r-1}$, and suppose that (BA) and (BE)(3) are satisfied up to time $n$. Then there exists a constant $\kappa_2 > 0$ such that
\[
\vert \xi_{n+p+1}(\omega) \vert \geq \frac{1}{r^{\kappa_2}}\frac{\vert\xi_n(\omega) \vert}{\vert I \vert}.
\]
\end{Lemma}
\begin{proof}
Denote $\Omega = \xi_{n+p+1}(\omega) $ and notice that for any two given parameters $a,b \in \omega$ we have
\begin{align}\label{bound_length_step1}
\vert \Omega \vert &\geq \vert F^{n + p + 1}(0;a) - F^{n + p + 1}(0;b) \vert \nonumber \\
&= \vert F^{p + 1}(\xi_{n}(a);a) - F^{p + 1}(\xi_{n}(b);b) \vert \nonumber \\
&\geq \vert F^{p+1}(\xi_{n}(a);a) - F^{p+1}(\xi_{n}(b);a) \vert \nonumber \\
&\qquad - \vert F^{p+1}(\xi_{n}(b);a) - F^{p+1}(\xi_{n}(b);b)\vert.
\end{align}
Due to exponential increase of the phase derivative along the critical orbit, the dependence on parameter is inessential in the following sense:
\begin{equation}\label{parameter_independence}
\vert F^{p+1}(\xi_{n}(b);a) - F^{p+1}(\xi_{n}(b);b)\vert \leq e^{-(\gamma /18)n} \vert \xi_n(\omega)\vert.
\end{equation}
To realise this, first notice that we have the following (somewhat crude) estimate for the parameter derivative:
\[
\vert \partial_a F^j(x;a) \vert \leq 5^j \qquad (j=1,2,\dots).
\]
Indeed, $\vert \partial_a F(x;a) \vert \leq 1 < 5$, and by induction
\begin{align*}
\vert \partial_a F^{j+1}(x;a)\vert &= \vert \partial_a (1-aF^j(x;a)^2) \vert \\
&= \vert -F^j(x;a)^2 - 2aF^j(x;a)\partial_a F^j(x;a) \vert \\
&\leq 1 + 4\cdot5^j \\
&\leq 5^{j+1}.
\end{align*}
Using the mean value theorem twice, Lemma \ref{tsujii_distortion} and (BE)(3) we find that
\[
\vert F^{p+1}(\xi_{n}(b);a) - F^{p+1}(\xi_{n}(b);b)\vert \leq [(1-A)T]^{-1} 5^{p+1}C^{-1}e^{-(\gamma/9) (n-1)}\vert \xi_n(\omega)\vert.
\]
In view of (\ref{bound_period}) and (BA), making $m_0$ larger if needed, the inequality (\ref{parameter_independence}) can be achieved.

Assume now that at time $p+1$ (BC) is broken for parameter $a$, and let $b$ be an endpoint of $\omega$ such that
\[
\vert \xi_n(a) - \xi_n(b) \vert \geq \frac{\vert \xi_n(\omega) \vert}{2}.
\]
Continuing the estimate of $\vert \Omega \vert$, using (\ref{parameter_independence}), we find that
\begin{align}\label{bound_length_step2}
\vert \Omega \vert &\geq \vert F^{p}(1-a\xi_{n}(a)^2;a) - F^{p}(1-a\xi_{n}(b)^2;a)\vert \nonumber \\ 
&\qquad - \vert F^{p+1}(\xi_{n}(b);a) - F^{p+1}(\xi_{n}(b);b)\vert \nonumber \\
&\geq \left(a \vert \xi_{n}(a) + \xi_{n}(b)\vert \vert \partial_x F^{p}(1-a\xi_{n}(a')^2;a)\vert - 2e^{-(\gamma /18)n}\right) \frac{\vert \xi_n(\omega) \vert}{2} \nonumber \\
&\geq \left(2ae^{-r}\vert \partial_xF^{p}(1-a\xi_{n}(a')^2;a) \vert - 2e^{-(\gamma /18)n} \right) \frac{\vert \xi_n(\omega)\vert}{2}.
\end{align}
Using Lemma \ref{bound_distortion} twice and the equality in (\ref{bound_estimate}) (with $p+1$ instead of $p$) together with (BC) (now reversed inequality) we continue the estimate in (\ref{bound_length_step2}) to find that
\begin{align}\label{bound_length_step3}
\vert \Omega \vert &\geq \left( 2ae^{-r}\frac{1}{4 a\eta^2}\frac{\vert \xi_{p+1} (a) \vert}{10(p+1)^2} - 2e^{-(\gamma/18)n} \right) \frac{\vert \xi_n(\omega)\vert}{2} \nonumber \\
&\geq \left( e^r \frac{\vert \xi_{p+1}(a) \vert}{20 (p+1)^2} - 2e^{-(\gamma/18)n} \right) \frac{\vert \xi_n(\omega)\vert}{2}.
\end{align}
In either case of $p \leq m_0$ or $p > m_0$ we have that (using (BA), (PR), and the assumption that our recurrence rate is admissible)
\[
\frac{\vert \xi_{p+1}(a) \vert}{(p+1)^2} \geq \frac{K}{3(p+1)^{2+\hat{e}}},
\]
where $\hat{e} = \max(\overline{e},\sigma)$.
We can make sure that the second term in the parenthesis in (\ref{bound_length_step3}) is always less than a fraction, say $1/2$, of the first term and therefore, using (BC), (\ref{bound_period}), and that $e^r \geq 1/(2r^2 \vert I \vert)$, we finish the estimate as follows
\begin{align}\label{bound_length_final}
\vert \Omega \vert &\geq \frac{K}{240}\frac{1}{(p+1)^{2+\hat{e}}} \vert \xi_n(\omega)\vert e^r \nonumber \\
&\geq \frac{K}{480}\frac{1}{r^2 (p+1)^{2 + \hat{e}}} \frac{\vert \xi_n(\omega) \vert}{\vert I \vert}\nonumber \\
&\geq \frac{K}{480 (2\kappa_1)^{2+\hat{e}}}\frac{1}{r^{4+\hat{e}}} \frac{\vert \xi_n(\omega) \vert}{\vert I \vert} \nonumber \\
&\geq \frac{1}{r^{\kappa_2}} \frac{\vert \xi_n(\omega) \vert}{\vert I \vert},
\end{align}
where we can choose $\kappa_2 = 5 + \hat{e}$ as long as $\delta$ is sufficiently small.
\end{proof}
\begin{Rem}
Using the lower bound for $p$, the upper bound
\[
\vert \xi_{n+p+1}(\omega) \vert \leq \frac{1}{r} \frac{\vert \xi_n(\omega)\vert}{\vert I \vert}
\]
can be proved similarly.
\end{Rem}

This finishes the analysis of the bound period, and we continue with describing the \emph{free period}. 
A free period will always follow a bound period, and during this period we will be iterating outside $(-\delta,\delta)$. We let $L$ denote the length of this period, i.e. $L$ is the smallest integer for which
\[
\xi_{n+p+L}(\omega) \cap (-\delta,\delta) \neq \emptyset.
\]
The following lemma gives an upper bound for the length of the free period, following the bound period of a complete return, or an essential return.
\begin{Lemma}[Free length]\label{free_period_lemma}
Let $\xi_n(\omega) \subset I_{r+1} \cup I_r \cup I_{r-1}$ with $n$ being the index of a complete return or an essential return, and suppose that (BA) and (BE)(3) are satisfied up to time $n$. Let $p$ be the associated bound period, and let $L$ be the free period. Then there exists a constant $\kappa_3 > 0$ such that
\[
L \leq \kappa_3 r.
\]
\end{Lemma}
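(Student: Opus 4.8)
The plan is to combine the expansion estimate for the critical orbit immediately after the bounded period (inequality (\ref{derivative_after_bounded_period})) with the Outside Expansion Lemma during the free period, and then argue that an interval which is expanding exponentially cannot stay small — hence cannot avoid $(-\delta,\delta)$ — for more than $\lesssim r$ iterates. More precisely, set $n' = n+p+1$, the first index after the bounded period, and recall from Lemma \ref{bounded_growth_lemma} that
\[
\vert \xi_{n'}(\omega) \vert \geq \frac{1}{r^{\kappa_2}} \frac{\vert \xi_n(\omega) \vert}{\vert I \vert} \geq \frac{c}{r^{\kappa_2 + 2}}\, e^{-r},
\]
using that $\vert \xi_n(\omega) \vert \geq \vert I_{rl}\vert = \vert I_r\vert/r^2 \asymp e^{-r}/r^2$ and $\vert I \vert \asymp e^{-r}$. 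So at time $n'$ the image has length at least $e^{-r}$ up to a polynomial-in-$r$ factor.

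Next I would propagate this length forward through the free period. By definition, for $j = 0,1,\dots,L-1$ the iterates $\xi_{n'+j}(\omega)$ avoid $(-\delta,\delta)$ (the only subtlety: the first few iterates after $n'$ might still be inside $(-\delta,\delta)$ but not be returns in the classified sense — here one uses that after a complete or essential return followed by its bounded period, $\xi_{n'}(\omega)$ is already outside $(-\delta,\delta)$, as established in the preceding paragraph of the text, so the free period genuinely consists of iterates outside $(-\delta,\delta)$). Applying the mean value theorem and the Outside Expansion Lemma (Lemma \ref{outside_expansion_lemma}) — whose hypotheses hold since the relevant orbit segment stays outside $(-\delta,\delta)$ and $a$ lies in the perturbation interval — together with bounded distortion across $\omega$ (which follows from the distance condition / phase distortion as used throughout, since $\xi_{n'}(\omega)$ is still a short interval), we get
\[
\vert \xi_{n'+j}(\omega) \vert \geq \delta\, C_M\, e^{\gamma_M j}\, \vert \xi_{n'}(\omega) \vert \cdot (\text{distortion factor}) \geq c'\, \delta\, e^{\gamma_M j}\, \frac{e^{-r}}{r^{\kappa_2+2}}
\]
for $j \leq L$. (One must check that the distortion constant is controlled, i.e. that $\xi_{n'}(\omega)$ and its forward iterates remain short enough that (\ref{phaseDist})-type comparisons apply; this is where the standard bookkeeping of the induction goes in, and since we stop as soon as we re-enter $(-\delta,\delta)$ the images never grow past the large scale $S$ during this period.)

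Finally, since every iterate $\xi_{n'+j}(\omega)$ with $j < L$ is contained in $[-1,1] \setminus (-\delta,\delta)$, its length is at most $2$. Combining with the lower bound above,
\[
2 \geq \vert \xi_{n'+L-1}(\omega) \vert \geq c'\, \delta\, e^{\gamma_M (L-1)}\, \frac{e^{-r}}{r^{\kappa_2+2}},
\]
and taking logarithms yields $\gamma_M (L-1) \leq r + (\kappa_2+2)\log r + \operatorname{const.}$, so that $L \leq \kappa_3 r$ for a suitable constant $\kappa_3 > 0$ (absorbing the $\log r$ term since $\log r \lesssim r$), provided $\delta$ is fixed and $m_0$ is large enough. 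The main obstacle I expect is not the expansion estimate itself but the verification that distortion stays bounded throughout the free period — that is, confirming that the image interval $\xi_{n'+j}(\omega)$ does not become so large that the comparison of derivatives across $\omega$ breaks down before time $L$; this is handled by the observation that we terminate the free period at the very first return to $(-\delta,\delta)$, so the images are controlled by the large scale $S$ and the Outside Expansion Lemma applies uniformly on the orbit segment under consideration.
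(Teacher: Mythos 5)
Your proposal follows essentially the same route as the paper's proof: apply Lemma \ref{bounded_growth_lemma} at time $n+p+1$, propagate through the free period via Lemma \ref{outside_expansion_lemma}, and cap with the trivial bound $\vert \xi_{n+p+j}(\omega)\vert \leq 2$. Two small points worth flagging. First, you carry a spurious factor of $e^{-r}$: since $\vert \xi_n(\omega)\vert \geq \vert I_{rl}\vert \asymp e^{-r}/r^2$ while $\vert I\vert \asymp e^{-r}$, the exponential factors in the ratio $\vert\xi_n(\omega)\vert/\vert I\vert$ cancel, so the correct conclusion of Lemma \ref{bounded_growth_lemma} here is $\vert\xi_{n+p+1}(\omega)\vert \gtrsim r^{-(\kappa_2+2)}$ with no $e^{-r}$. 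This gives the sharper estimate $L \lesssim \kappa_2 \log r + \Delta$, which is what the paper records (and which is of course still $\lesssim r$ because any return index satisfies $r \geq \Delta$); your version is not wrong, merely lossy. Second, the parameter-independence step deserves to be made explicit rather than deferred to ``bookkeeping.'' The paper handles it by a bootstrap: one assumes a priori that $j \leq \alpha r$ for a large constant $\alpha$, so that the crude bound $\vert \partial_a F^j(x;a)\vert \leq 5^j$ is dominated by the derivative growth $C e^{\gamma(n-1)}$ as in (\ref{parameter_independence}), and then derives the stronger bound $j \leq \kappa_3 r < \alpha r$, closing the loop. Without some a priori bound on $j$, the parameter contribution cannot be discarded, so this bootstrap is not optional.
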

\begin{proof}
Assuming $j \leq L$ and $j \leq (\log n)^2$, similar calculations as in the proof of Lemma \ref{bound_growth_lemma} gives us parameter independence (see (\ref{parameter_independence}) and notice that from (\ref{derivative_after_bound_period}) we are allowed to apply Lemma \ref{tsujii_distortion}); using Lemma \ref{bound_growth_lemma} and Lemma \ref{outside_expansion_lemma} we find that
\[
2 \geq \vert \xi_{n+p+j}(\omega) \vert \geq \frac{\delta C_M}{2}e^{\gamma_M(j-1)}\frac{1}{r^{\kappa_2}}.
\]
Taking the logarithm, using (BA), and making sure that $m_0$ is large enough, we therefore have
\[
j \leq 1 + \frac{1}{\gamma_M}(\kappa_2 \log r + \Delta + \log 4 - \log C_M) \lesssim r \lesssim \log n < \frac{1}{2}(\log n)^2,
\]
as long as $j \leq L$ and $j \leq (\log n)^2$. This tells us that $j \leq L$ must break before $j \leq (\log n)^2$; in particular there is a constant $\kappa_3 > 0$ such that $L \leq \kappa_3 r$.
\end{proof}
\begin{Rem}\label{free_growth}
If the return $\xi_{n+p+L}(\omega)$ is inessential or essential, then there is no $\delta$-dependence in the growth factor; more generally, if the prerequisites of Lemma~\ref{bound_growth_lemma} are satisfied, then
\[
\vert \xi_{n+p+L}(\omega) \vert \geq \frac{C_M}{2}e^{\gamma_M(L-1)} \frac{1}{r^{\kappa_2}}\frac{\vert \xi_{n}(\omega)\vert}{\vert I \vert}.
\]
\end{Rem}

Before considering iterations of $\omega = \omega_k^{rl} \subset \Delta_k$ from $m = m_k^{rl}$ to $m_{k+1}^{rl}$, we make the following observation that as long as (BA) is assumed in a time window $[n,2n]$, the derivative will not drop too much.
\begin{Lemma}\label{window_lemma}
Suppose that $a$ is a parameter such that
\begin{equation}\label{window_derivative}
\vert \partial_x F^j(1;a) \vert \geq C e^{\gamma' j} \qquad (j = 0,1,\dots,n-1),
\end{equation}
with $\gamma' \geq \gamma/3$.
Then, if (BA) is satisfied up to time $2n$, we have
\[
\vert \partial_x F^{n+j}(1;a) \vert \geq Ce^{(\gamma'/3)(n+j)} \qquad (j = 0,1, \dots,n-1).
\]
In other words, if (BA) and (BE)(1) [(BE)(2)] are satisfied up to time $n$ then (BE)(2) [(BE)(3)] is satisfied up to time $2n$, as long as (BA) is.
\end{Lemma}

\begin{proof}
The proof is based on the fact that we trivially have no loss of derivative during the bound and free periods. Indeed suppose $\xi_{n'}(a) \sim e^{-r}$, with $n' \geq n$ and let $p$ be the bound period (here we use pointwise binding, see Remark~\ref{pointwise_binding}), and $L$ the free period. Moreover we assume that $n' + p + L < 2n$; in particular this implies $p < n$ and we can use (\ref{window_derivative}) during this period. Introducing $D_p = \vert \partial_x F^p(1;a) \vert$ and using similar calculations as in Lemma~\ref{bound_growth_lemma} (e.g. the equality in (\ref{bound_estimate}) and reversed inequality in (BC)) we find that
\[
e^{-2r} D_p \gtrsim a \eta^2 \vert \partial_x F^p(1-a\eta^2;a) \vert \geq \frac{\vert \xi_{p+1}(a) \vert}{10(p+1)^2} \gtrsim \frac{1}{(p+1)^{2 + \hat{e}}},
\]
where we used (BA) (or (PR)). Since $p < n$ we are free to use (\ref{window_derivative}) and therefore the above inequalities yield
\[
e^{-r} D_p \gtrsim D_p^{1/2} \frac{1}{\sqrt{(p+1)^{2 + \hat{e}}}} \gtrsim \frac{e^{(\gamma'/2)p}}{\sqrt{(p+1)^{2 + \hat{e}}}} \geq C_M^{-1},
\]
provided $\delta$ is small enough. Here in the last inequality we used the lower bound in (\ref{bound_period}) (see Remark \ref{lower_bound_bound_period_remark}). Assuming $\xi_{n'+p+L}(a)$ is a return (and that $n'+p+L < 2n$), we therefore have
\begin{align*}
\vert \partial_x F^{p+L}(\xi_{n'}(a);a) \vert &\geq 2a\vert \xi_{n'}(a) \vert \vert \partial_x F^p(1-a\xi_{n'}(a)^2;a) \vert \vert \partial_x F^{L-1}(\xi_{n'+p+1}(a);a)\vert \\
&\gtrsim e^{-r} D_p C_M e^{\gamma_M(L-1)} \\
&\geq 1.
\end{align*}
We conclude that the combination of a return, a bound period, and a free period does not decrease the derivative. 

Let us now follow a parameter $a$ satisfying (\ref{window_derivative}) and (BA) up to time $2n$. If the iterates $\xi_{n+j}(a)$ are always outside $(-\delta,\delta)$ then
\begin{align*}
\vert \partial_x F^{n+j}(1;a) \vert &= \vert \partial_x F^{n-1}(1;a) \vert \vert \partial_x F^{j+1}(\xi_n(a);a) \vert \\
&\geq Ce^{\gamma'(n-1)} \delta C_M e^{\gamma_M(j+1)} \\
&\geq Ce^{(\gamma'/3)(n+j)} \delta C_M e^{(2\gamma'/3)(n+j)} \\
&\geq Ce^{(\gamma'/3)(n+j)} \qquad (j = 0,1,\dots,n-1),
\end{align*}
provided $m_0$ is big enough.

Otherwise, the worst case is if we have a short free period followed by a return, a bound period, a free period, and so on, and which ends with a return together with a short bound period. In this case, using the above argument, the estimate is as follows:
\begin{align*}
\vert \partial_x F^{n+j}(1;a) \vert &\geq \vert \partial_x F^{n-1}(1;a) \vert \cdot C_M \cdot 1 \cdot 1 \cdots 1 \cdot 2a\vert \xi_{n+j}(a) \vert \cdot C \\
&\geq Ce^{\gamma'(n-1)} C_M C 2a \frac{\delta_{n+j}}{3} \\
&\geq Ce^{(\gamma'/3)(n+j)} C_M C \frac{2}{3a} e^{(\gamma'/3)n - \overline{e}\log(2n)} \\
&\geq Ce^{(\gamma'/3)(n+j)} \qquad (j = 0,1,\dots,n-1),
\end{align*}
provided $m_0$ is big enough. This proves the lemma.
\end{proof}

\subsection{From the \texorpdfstring{$k^\text{th}$}{k:th} complete return to the first inessential return}
If $\omega \subset T_k$ then we have already reached an escape situation and proceed accordingly as is described below in the section about escape. We therefore assume $\omega \subset N_k$ and $\xi_m(\omega) = I_{r_0l} \subset (-4\delta,4\delta)$.

If it happens that for some $j \leq p$
\[
\xi_{m+j}(\omega) \cap (-\delta_{m+j}/3,\delta_{m+j}/3) \neq \emptyset,
\]
then we stop and consider this return complete. If not, we notice that $\xi_{m+p}(\omega)$ can not be a return, unless it is escape or complete; indeed we would otherwise have $\vert \xi_{m+p+1}(\omega) \vert < \vert \xi_{m+p}(\omega)\vert$, due to the fact that we return close to the critical point, thus contradicting the definition of the bound period. We therefore assume that $\xi_{m+p}(\omega)$ does not intersect $(-\delta,\delta)$.

Up until the next return we will therefore experience an orbit outside of $(-\delta,\delta)$, i.e. we will be in a free period. After the free period, our return is either inessential, essential, escape, or complete. In the next section we consider the situation of an inessential return. 

\subsection{From the first inessential return to the first essential return}
Let $i_1 = m + p_0 + L_0$ denote the index of the first inessential return to $(-\delta,\delta)$. We will keep iterating $\xi_{i_1}(\omega)$ until we once again return. If this next return is again inessential, we denote its index by $i_2 = i_1 + p_1 + L_1$, where $p_1$ and $L_1$ are the associated bound period and free period, respectively. Continuing like this, let $i_j$ be the index of the $j^\text{th}$ inessential return.

The following lemma gives an upper bound for the total time spent doing inessential returns (compare with Lemma~2.3 in \cite{BC2}).
\begin{Lemma}[Inessential Length]\label{inessential_period_lemma}
Let $\xi_n(\omega) \subset I_{r+1} \cup I_r \cup I_{r-1}$ with $n$ being the index of a complete return or an essential return, and suppose that (BA) and (BE)(2) are satisfied up to time $n$. Then there exists a constant $\kappa_4 > 0$ such that the total time $o$ spent doing inessential returns satisfy
\[
o \leq \kappa_4 r.
\]
\end{Lemma}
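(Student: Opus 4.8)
The plan is to follow the strategy of Lemma 2.3 in \cite{BC2}, adapted to the present partitioning scheme. Let $n$ be the index of a complete or essential return with $\xi_n(\omega) \subset I_{r+1} \cup I_r \cup I_{r-1}$, and let $i_1 < i_2 < \cdots$ denote the successive indices of the inessential returns that follow (before the next essential, escape, or complete return). Write $s_j$ for the ``depth'' of the $j^{\text{th}}$ inessential return, i.e.\ $\xi_{i_j}(\omega)$ is contained in its host interval $I_{s_j} \cup I_{s_j \pm 1}$, and let $p_j$, $L_j$ be the associated bounded and free periods. By Lemma \ref{bounded_period_lemma} and Lemma \ref{free_period_lemma}, $p_j + L_j \lesssim s_j$, so the total inessential time $o = \sum_j (p_j + L_j)$ is bounded, up to a constant, by $\sum_j s_j$. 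The whole game is therefore to show $\sum_j s_j \lesssim r$.

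The mechanism is that at an inessential return the image $\xi_{i_j}(\omega)$ has length $< |I_{s_j}| \asymp e^{-s_j}$, while between consecutive inessential returns the image grows by a definite exponential factor: combining the bounded-growth estimate (Lemma \ref{bounded_growth_lemma}, together with its remark) with the outside-expansion lemma (Lemma \ref{outside_expansion_lemma}) over the free period, one gets
\[
|\xi_{i_{j+1}}(\omega)| \geq \frac{C_M}{2} e^{\gamma_M (L_j - 1)} \frac{1}{s_j^{\kappa_2}} \frac{|\xi_{i_j}(\omega)|}{|I_{s_j}|}
\geq e^{\gamma_M L_j} s_j^{-\kappa_2} e^{s_j - 1} |\xi_{i_j}(\omega)|,
\]
since $|\xi_{i_j}(\omega)| / |I_{s_j}| \gtrsim 1$ by the definition of an inessential return (the image fills a fixed fraction of its host). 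Meanwhile $|\xi_{i_{j+1}}(\omega)| < |I_{s_{j+1}}| \asymp e^{-s_{j+1}}$, and $|\xi_{i_j}(\omega)| \gtrsim e^{-s_j}/s_j^2$ (it contains an element $I_{s_j l}$). Taking logarithms in the chain from the first inessential return to the last, the $e^{s_j}$ growth factors telescape against the $e^{-s_{j+1}}$ upper bounds, leaving a bound of the shape $\sum_j s_j \lesssim s_1 + (\text{final depth}) + \sum_j (\kappa_2 \log s_j + O(1))$. Since the first host interval satisfies $s_1 \leq r + 1$ and every inessential return is genuinely inside $(-\delta,\delta)$ so $s_j \geq \Delta$, the logarithmic and constant error terms are each at most a fixed fraction of $s_j$ once $\delta$ is small (i.e.\ $\Delta$ large), and they can be absorbed; one concludes $\sum_j s_j \leq \kappa_4' r$, hence $o = \sum_j(p_j + L_j) \leq \kappa_4 r$.

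The main obstacle is bookkeeping the telescoping correctly: one must be careful that the ``inessential'' hypothesis $|\xi_{i_j}(\omega)| < |I_{s_j}|$ is used to bound depths from above, while the lower bound $|\xi_{i_j}(\omega)| \gtrsim e^{-s_j}/s_j^2$ (from containing a partition element $I_{s_j l}$, which is guaranteed because $\xi_{i_j}(\omega)$ arises from a previous partition step) is used to feed the growth estimate forward, and that the two do not conflict. A secondary point is ensuring that the return at $i_j$ really is preceded by an essential or complete return so that Lemma \ref{bounded_growth_lemma} and Lemma \ref{free_period_lemma} apply with the stated constants; this is exactly why the lemma is phrased starting from an essential or complete return $n$, and between two consecutive inessential returns the relevant ``$I$'' is the host interval, which plays the role required by those lemmas. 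Once the monotone growth is set up, the estimate $o \leq \kappa_4 r$ follows by a geometric-series argument identical in spirit to \cite{BC2}.
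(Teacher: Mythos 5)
Your overall mechanism is the right one and matches the paper's proof: chain the growth ratios from one inessential return to the next, take logarithms, and absorb the $\log$ and constant error terms into the linear terms using that every depth $s_j \geq \Delta$. The per-step growth estimate (from Lemma~\ref{bounded_growth_lemma}, Remark~\ref{free_growth}, and Lemma~\ref{outside_expansion_lemma}) and the reduction $o = \sum_j(p_j+L_j) \lesssim \sum_j s_j$ via Lemma~\ref{bounded_period_lemma} and Lemma~\ref{free_period_lemma} are also exactly as in the paper.

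However, the bookkeeping you describe rests on two claims that are false, and they both come from a misreading of the definition of an inessential return. You assert, first, that ``$\vert\xi_{i_j}(\omega)\vert/\vert I_{s_j}\vert \gtrsim 1$ by the definition of an inessential return (the image fills a fixed fraction of its host),'' and second, that ``$\vert\xi_{i_j}(\omega)\vert \gtrsim e^{-s_j}/s_j^2$ (it contains an element $I_{s_j l}$).'' The definition says the opposite: at an inessential return, $\xi_{i_j}(\omega) \subset I_{r'l'}\cup I_{r''l''}$ \emph{and} $\vert\xi_{i_j}(\omega)\vert < \vert I_{r'l'}\vert$, so the image is strictly \emph{smaller} than a partition element and need not fill any fixed fraction of its host; there is no lower bound on $\vert\xi_{i_j}(\omega)\vert$ coming from the inessential condition. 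In particular, the image does not arise from a fresh partition step between two inessential returns (no partition is made there), so it does not contain an $I_{s_j l}$. Consequently your telescoping ``$e^{s_j}$ against $e^{-s_{j+1}}$'' does not close, and the resulting bound $\sum_j s_j \lesssim s_1 + (\text{final depth}) + \sum_j(\kappa_2\log s_j + O(1))$ is not obtained. Your side claim $s_1 \leq r+1$ is also unjustified: the position of the first inessential return is not controlled by $r$ alone.

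The fix is simple and is what the paper does. Write $2 \geq \vert\xi_{i_j}(\omega)\vert = \vert\xi_{i_1}(\omega)\vert \prod_{k<j}\vert\xi_{i_{k+1}}(\omega)\vert/\vert\xi_{i_k}(\omega)\vert$ and bound each ratio from below by $\tfrac{C_M}{2}e^{\gamma_M(L_k-1)+r_k}/r_k^{\kappa_2}$, using only the upper bound $\vert I_{r_k}\vert \leq e^{-r_k}$ on the host interval (automatic) and no intermediate size lower bounds. For the initial term, $\vert\xi_{i_1}(\omega)\vert \gtrsim \delta C_M e^{\gamma_M}/r^{\kappa_2}$ follows from Remark~\ref{free_growth}/Lemma~\ref{outside_expansion_lemma} applied at the essential or complete return $n$, since there $\xi_n(\omega)/I$ has ratio $1$; this, not $s_1 \leq r+1$, is what bounds the starting point. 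Taking logarithms then gives $\sum_k(r_k - \kappa_2\log r_k + \gamma_M(L_k-1) + O(1)) \leq \kappa_2\log r + \Delta + O(1)$, and for $\delta$ small enough ($r_k \geq \Delta$ large) the linear terms dominate, giving $\sum_k(r_k + L_k) \lesssim r$ and hence $o \leq \kappa_4 r$.
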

\begin{proof}
Let $i_1 = n + p + L$ be the index of the first inessential return, i.e. $\xi_{i_1}(\omega) \subset I_{r_1}$, with $I_{r_1}$ being the host interval. From Lemma~\ref{bound_period_lemma} and Lemma~\ref{free_period_lemma}, together with (BA), we have that
\[
i_1 = n + p + L \leq n + (\kappa_1 + \kappa_3)r \leq 2n,
\]
provided $m_0$ is large enough. We can therefore apply Lemma~\ref{window_lemma} and conclude that (BE)(3) is satisfied at time $i_1$. To this first inessential return we associate a bound period of length $p_1$ (satisfying $p_1 \leq \kappa_1 r_1$ due to the fact that (BE)(3) is satisfied time $i_1$) and a free period of length $L_1$. We let $i_2 = i_1 + p_1 + L_1$ denote the index of the second inessential return. Continuing like this, we denote by $i_j = i_{j-1} + p_{j-1} + L_{j-1}$ the index of the $j^\text{th}$ inessential return. With $o_j$ denoting the total time spent doing inessential returns up to time $i_j$, we have that $o_j = i_{j} - i_1 = \sum_{k = 1}^{j-1} (p_k + L_k)$. Suppose that the return with index $i_s$ is the first that is not inessential. We estimate $o = o_s$ as follows. Suppose that $o_j$ is as above and that $p_k \leq \kappa_1 r_k$ for $k = 1,2,\dots,j-1$. Using Remark \ref{free_growth} we find that
\begin{equation}\label{inessential_growth_factor}
\frac{\vert \xi_{i_{k+1}}(\omega)\vert}{\vert \xi_{i_k}(\omega)\vert} \geq \frac{C_M e^{\gamma_M (L_k-1)}}{2 r_k^{\kappa_2}\vert I_{r_k}\vert} \geq \frac{C_M}{2} \frac{e^{\gamma_M (L_k-1) + r_k}}{r_k^{\kappa_2}},
\end{equation}
and therefore
\begin{equation}\label{inessential_product}
2 \geq \vert \xi_{i_j}(\omega) \vert = \vert \xi_{i_1}(\omega)\vert \prod_{k=1}^{j-1} \frac{\vert \xi_{i_{k+1}}(\omega)\vert}{\vert \xi_{i_k}(\omega) \vert} \geq \frac{\delta C_Me^{\gamma_M}}{2 r^{\kappa_2}} \prod_{k=1}^{j-1} \frac{C_M}{2} \frac{e^{\gamma_M (L_k-1) + r_k}}{r_k^{\kappa_2}}.
\end{equation}
Here the $\delta$ is added to make sure that the estimate also holds for the last free orbit, when the return can be escape or complete. This gives us a rather poor estimate, but since $p \lesssim r$ it is good enough.

Taking the logarithm of (\ref{inessential_product}) we find that
\[
\sum_{k=1}^{j-1} \left(\log C_M - \log 2 + \gamma_M (L_k-1) + r_k - \kappa_2 \log r_k \right) \leq \kappa_2 \log r + \Delta + \operatorname{const.}.
\]
Provided $\delta$ is small enough we have $r_k \geq 4\kappa_2 \log r_k$ and $r_k \geq -\log \delta > -2(\log C_M + \gamma_M + \log 2)$. Therefore, using $p_k \leq \kappa_1 r_k$, we find that
\begin{align*}
o_j = i_j - i_1 = \sum_{k=1}^{j-1}( p_k + L_k ) \leq \kappa_4 r,
\end{align*}
with $\kappa_4$ being an absolute constant. In particular
\[
i_j = i_1 + o_j \leq 2n,
\]
and therefore (BE)(3) is still valid at time $i_j$. Consequently the associated bound period satisfies $p_j \leq \kappa_1 r_j$, and the above argument can therefore be repeated. With this we conclude that $o_s \leq \kappa_4 r$.
\end{proof}

We proceed in the next section with describing the situation if our return is assumed to be essential.

\subsection{From the first essential return to the first escape return}
With $n_1$ denoting the index of the first essential return, we are in the following situation
\begin{align*}
&\xi_{n_1}(\omega) \cap I_{rl} \neq \emptyset, \quad \vert \xi_{n_1}(\omega)\vert \geq \vert I_{rl}\vert,\\
&\text{and}\ \xi_{n_1}(\omega) \subset (-4\delta,4\delta) \smallsetminus (-\delta_{n_1}/3,\delta_{n_1}/3),
\end{align*}
for some $r,l$. At this point, in order not to lose too much distortion, we will make a partition of as much as possible, and keep iterating what is left. That is, we will consider iterations of larger partition elements $I_r = (e^{-r-1},e^{-r}) \subset (-4\delta,4\delta)$, and we establish an upper bound for the \emph{number} of essential returns needed to reach an escape return or a complete return.

Let $\Omega_1 = \xi_{n_1}(\omega)$ and let $I_1 = I_{r_1} \subset \Omega_1$, for smallest such $r_1$. (In fact, we extend $I_1$ to the closest endpoint of $\Omega_1$, and therefore have $I_1 \subset I_{r_1} \cup I_{r_1-1}$.) If there is no such $r$, we instead let $I_1 = \Omega_1$. Moreover, let $\omega^1$ be the interval in parameter space for which $\xi_{n_1}(\omega^1) = I_{1}$. The interval $I_1$ is referred to as the essential interval, and this is the interval we will iterate. If $\hat{\omega} = \omega \smallsetminus \omega^1$ is nonempty we make a partition
\[
\hat{\omega} = \bigcup_{r,l} \omega^{rl} \subset \Delta_{k+1},
\]
where each $\omega^{rl}$ is such that $I_{rl} \subset \xi_{n_1}(\omega^{rl}) = I_{rl}\cup I_{r'l'} \subset (-4\delta,4\delta)$. (If there is not enough left for a partition, we extend $I_1$ further so that $I_1 \subset I_{r_1+1}\cup I_{r_1} \cup I_{r_1-1}$.) Notice that, since the intervals $I_r$ are dyadic, the proportion of what remains after partitioning satisfies
\begin{equation}\label{partitioned_free_return}
\frac{\vert I_1 \vert}{\vert \Omega_1 \vert} \geq 1 - \frac{1}{e} \geq \frac{1}{2}.
\end{equation}
We associate for each partitioned parameter interval $\omega^{rl}$ the complete return time $n_1$ (even though nothing is removed from these intervals). From the conclusions made in the previous sections we know that
\[
n_1 = m + p_0 + L_0 + o_0 \leq m + (\kappa_1 + \kappa_3 + \kappa_4)r_0 \leq 2m,
\] 
provided $m_0$ is large enough. In particular Lemma~\ref{window_lemma} tells us that (BE)(2) is satisfied up to time $n_1$ for all $a \in \omega$. At this step, to make sure that (BE)(1) is satisfied for our partitioned parameter intervals $\omega^{rl} \subset \Delta_{k+1}$, we make the following rule (compare with the initial iterates at the beginning of the induction step). If there is no $a' \in \omega$ such that
\[
\vert \partial_x F^{n_1-1}(1;a')\vert \geq C_B e^{\gamma_B(n_1-1)},
\]
then we remove the entire interval. If there is such a parameter, on the other hand, using Lemma~\ref{main_distortion_lemma} we have that
\begin{align*}
\vert \partial_x F^{n_1 - 1}(1;a) \vert &\geq D_1^{-(\log^* m)^2} \vert \partial_x F^{n_1-1}(1;a')\vert \\
&\geq C_B \exp\left\{ \left(\gamma_B - \frac{(\log^* m)^2}{n_1-1}\log D_1 \right)(n_1-1) \right\} \\
&\geq Ce^{\gamma(n_1-1)},
\end{align*}
provided $m_0$ is large enough.

With the above rules applied at each essential return to come, we now describe the iterations. Since $\xi_{m}(\omega) = I_{r_0l}$, using Lemma~\ref{bound_growth_lemma} we know that the length of $\Omega_1$ satisfies
\[
\vert \Omega_1 \vert \geq \frac{C_M e^{\gamma_M}}{2}\frac{1}{r_0^{\kappa_2}} \geq \frac{1}{r_0^{\kappa_2 + 1}}.
\]
Notice that since $e^{-r_1+1} \geq \vert \Omega_1 \vert$ we have that $r_1 \leq 2\kappa_2 \log r_0$. Iterating $I_{1}$ with the same rules as before, we will eventually reach a second noninessential return, and if this return is essential we denote its index by $n_2$. This index constitutes the addition of a bound period, a free period, and an inessential period: $n_2 = n_1 + p_1 + L_1 + o_1$. Similarly as before, we let $\Omega_2 = \xi_{n_2}(\omega^1)$, and let $I_{2} \subset \Omega_2$ denote the essential interval of $\Omega_2$. Let $\omega^2 \subset \omega^1$ be such that $\xi_{n_2}(\omega^2) = I_{2}$, and make a complete partition of $\omega^1 \smallsetminus \omega^2$. By applying Lemma~\ref{bound_growth_lemma} again, we find that
\[
\vert \Omega_2 \vert \geq \frac{1}{r_1^{\kappa_2 + 1}} \geq \frac{1}{(2\kappa_2 \log r_0)^{\kappa_2 + 1}}.
\]

If we have yet to reach an escape return or a complete return, let $n_j$ be the index of the $j^\text{th}$ essential return, and realise that we are in the following situation
\begin{equation}\label{jth outer return}
\xi_{n_j}(\omega^j) = I_j \subset \Omega_j = \xi_{n_j}(\omega^{j-1})\quad \text{and} \quad \vert \Omega_j \vert \geq \frac{1}{r_{j-1}^{\kappa_2 + 1}}.
\end{equation}
Introducing the function $r \mapsto 2\kappa_2 \log r = \varphi(r)$, we see from the above that $r_j \leq \varphi^j(r_0)$. The orbit $\varphi^j(r_0)$ will tend to the attracting fixed point $\hat{r} = -2\kappa_2 W(-1/(2\kappa_2))$, where $W$ is the Lambert $W$ function. The following simple lemma gives an upper bound for the number of essential returns needed to reach an escape return or a complete return.
\begin{Lemma}
Let $\varphi(r) = 2\kappa_2 \log r$, and let $s = s(r)$ be the integer defined by
\[
\log_{s} r \leq 2\kappa_2 \leq \log_{s-1} r.
\]
Then
\[
\varphi^s(r) \leq 12\kappa_2^2.
\]
\end{Lemma}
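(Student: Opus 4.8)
The plan is to control the orbit $\varphi^j(r)$ from above by $2c\log_j r$, where for brevity I set $c = 2\kappa_2$, and then to read off the final bound at time $s$ directly from the defining inequality $\log_s r \le c$. Before the main induction I would record two elementary facts. First, since $\log x < x$ for every $x>0$, the finite string $\log_0 r > \log_1 r > \dots > \log_{s-1} r$ is strictly decreasing and consists of positive numbers (so the orbit $\varphi^j(r)$ and all the iterated logarithms involved are well defined); combined with the definition of $s$ this gives $\log_j r \ge \log_{s-1} r \ge c$ for all $0 \le j \le s-1$. Second, $\log(2c) \le c$ whenever $c \ge 1$ — and here $c = 2\kappa_2 \ge 10$ — because $c - \log c - 1$ vanishes at $c=1$ and has nonnegative derivative $1 - 1/c$ on $[1,\infty)$, so $c \ge 1 + \log c > \log 2 + \log c = \log(2c)$.

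The core step is the claim that $\varphi^j(r) \le 2c\log_j r$ for $1 \le j \le s-1$, proved by induction on $j$. The base case $j = 1$ is immediate since $\varphi(r) = c\log r = c\log_1 r \le 2c\log_1 r$. For the inductive step, assume the bound at level $j$ with $j+1 \le s-1$; then monotonicity of $\log$ and the two preliminary facts give
\[
\varphi^{j+1}(r) = c\log\bigl(\varphi^j(r)\bigr) \le c\log\bigl(2c\log_j r\bigr) = c\log(2c) + c\log_{j+1} r \le 2c\log_{j+1} r,
\]
the last inequality because $\log(2c) \le c \le \log_{j+1} r$.

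Finally I would evaluate at time $s$. If $s \ge 2$, applying $\varphi$ once more to the case $j = s-1$ of the claim and using $\log(2c) \le c$ together with the defining inequality $\log_s r \le c$ yields
\[
\varphi^s(r) = c\log\bigl(\varphi^{s-1}(r)\bigr) \le c\log(2c) + c\log_s r \le c\cdot c + c\cdot c = 2c^2 \le 3c^2 = 12\kappa_2^2 ,
\]
while the degenerate case $s = 1$ is even easier, since then $\varphi^s(r) = c\log r = c\log_s r \le c^2$. I do not expect a genuine obstacle here; the only things requiring a little care are the bookkeeping around the endpoint $j = s-1$ and the $s = 1$ edge case, plus the observation — supplied by $\log x < x$ — that the iterated logarithms stay positive and decreasing on the range $0 \le j \le s$, so that everything above is well defined.
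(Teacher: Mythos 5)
Your proof is correct and follows essentially the same route as the paper's: both establish an inductive bound of the form $\varphi^j(r) \leq C\kappa_2\log_j r$ for $j \leq s-1$ (the paper with $C=6$, you with $C=4$) and then apply $\varphi$ once more, using $\log_s r \leq 2\kappa_2$ to land inside $12\kappa_2^2$. Your version is a touch more explicit about the elementary inequality $\log(2c) \leq c$ and the $s=1$ edge case, but the argument is the same.
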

\begin{proof}
Using the fact that $3 \leq 2\kappa_2 \leq \log_j r$, for $j = 0,1,\dots, s-1$, it is straightforward to check that
\begin{equation}\label{varphi_upper_bound}
\varphi^j(r) \leq 6 \kappa_2 \log_j r.
\end{equation}
Therefore
\[
\varphi^s(r) \leq 2\kappa_2 \log\left( 6 \kappa_2 \log_{s-1}r \right) = 2\kappa_2 \left( \log 3 + \log 2\kappa_2 + \log_s r\right) \leq 12 \kappa_2^2.
\]
\end{proof}
Given $s = s(r_0)$ as in the above lemma we have that $r_s \leq \varphi^s(r_0) \leq 12\kappa_2^2$. By making sure $\delta$ is small enough we therefore conclude that
\[
\vert \Omega_{s+1} \vert \geq \frac{1}{(12\kappa_2^2)^{\kappa_2 + 1}} \geq 4\delta.
\]
To express $s$ in terms of $r_0$ we introduce the so-called iterated logarithm, which is defined recursively as
\[
\log^* x = \begin{cases} 0 &\mbox{if}\ x \leq 1, \\ 1 + \log^* \log x &\mbox{if}\ x > 1. \end{cases}
\]
That is, $\log^* x$ is the number of times one has to apply to logarithm to $x$ in order for the result to be less than or equal to one. 

Since $s$ satisfies $\log_s r_0 \leq 2\kappa_2 \leq \log_{s-1} r_0$ and since $2\kappa_2 > 1$, we have 
\begin{equation}\label{number_of_free_returns}
s \leq \log^* r_0 \leq \log^* m.
\end{equation}

We finish by giving an upper bound for the index of the first escape return (or $(k+1)^{\text{th}}$ complete return), i.e. we wish to estimate
\[
n_{s+1} = m + \sum_{j=0}^{s} \left(p_j + L_j + o_j\right).
\]
From Lemma~\ref{bound_period_lemma}, Lemma~\ref{free_period_lemma}, and Lemma~\ref{inessential_period_lemma}, we have that
\[
p_j \leq \kappa_1 r_j,\quad L_j \leq \kappa_3 r_j,\quad \text{and}\quad o_j \leq \kappa_4 r_j.
\]
Together with the inequalities $r_j \leq \varphi^j(r_0)$ and (\ref{varphi_upper_bound}),  we find that
\begin{align*}
\sum_{j=0}^{s}\left( p_j + L_j + o_j\right) &\lesssim r_0 + \sum_{j=1}^{s} \varphi^j(r_0) \\
&\lesssim r_0 + \sum_{j=1}^{s} \log_j r_0 \\
&\lesssim r_0.
\end{align*}
Using (BA) we conclude that $n_{s+1} - m \lesssim \log m$, provided $m_0$ is large enough.

\subsection{From the first escape return to the \texorpdfstring{$(k+1)^\text{th}$}{(k+1):th} complete return}
Keeping the notation from the previous section, $\Omega_{s+1} = \xi_{n_{s+1}}(\omega^s)$ is the first escape return, satisfying
\begin{align*}
&\Omega_{s+1} \cap (-\delta,\delta) \neq \emptyset, \quad \Omega_{s+1} \cap (-\delta_{n_s}/3,\delta_{n_s}/3) = \emptyset, \\
&\text{and} \quad \vert \Omega_{s+1} \smallsetminus (-\delta,\delta) \vert \geq 3\delta.
\end{align*}
We will keep iterating $\omega^s$ until we get a complete return, and we show below that this must happen within finite (uniform) time. In order to not run into problems with distortion we will, as in the case of essential returns, whenever possible make a partition of everything that is mapped inside of $(-\delta,\delta)$, and the corresponding parameter intervals will be a part of $\Delta_{k+1}$; i.e. at time $n_{s+1+j} = n_{s+1}+j$ let $I_{s+1+j} = \Omega_{s+1+j} \smallsetminus \left(\Omega_{s+1+j} \cap (-\delta,\delta)\right)$, let $\omega^{s+1+j}$ be such that $\xi_{n_{s+1+j}}(\omega^{s+1+j}) = I_{s+1+j}$, and make a partition of $\omega^{s+j}\smallsetminus \omega^{s+1+j}$. As in the case of essential returns, we associate to each partitioned parameter interval the complete time $n_{s+1+j}$, and as before we make sure that at these times (BE)(1) is satisfied.

Let $\omega_e = \omega_L \cup \omega_M \cup \omega_R$ be the disjoint union of parameter intervals for which
\[
\xi_{n_{s+1}}(\omega_L) = (\delta,2\delta), \quad \xi_{n_{s+1}}(\omega_M) = (2\delta,3\delta),\quad \text{and} \quad \xi_{n_{s+1}}(\omega_R) = (3\delta,4\delta).
\]
Clearly it is enough to show that $\omega_e$ reaches a complete return within finite time. Let $t_*$ be the smallest integer for which
\[
C_M e^{\gamma_M t_*} \geq 4.
\]
If $\delta$ is small enough, and if $\vert \omega_0 \vert = 2\epsilon$ is small enough, we can make sure that 
\[
\xi_{n_{s+1} + j}(\omega_e) \cap (-2\delta,2\delta) = \emptyset \qquad (1 \leq j \leq t_*). 
\]
Suppose that, for some $j \geq t_*$, $\xi_{n_{s+1+j}}(\omega_e) \cap (-\delta,\delta) \neq \emptyset$, and that this return is not complete. Assuming that $\omega_L$ returns we can not have $\xi_{n_{s+1+j}}(\omega_L) \subset (-2\delta,2\delta)$. Indeed, if this was the case, then (using Lemma~\ref{outside_expansion_lemma} and parameter independence)
\[
\vert \xi_{n_{s+1+j}}(\omega_L)\vert  > 2 \vert \xi_{n_{s+1}}(\omega_L)\vert > 2\delta,
\]
contradicting the return not being complete. We conclude that after partitioning what is mapped inside of $(-\delta,\delta)$, what is left is of size at least $\delta$, and we are back to the original setting. In particular, $\omega_M$ did not return to $(-\delta,\delta)$. Repeating this argument, $\omega_L$ and $\omega_R$ will return, but $\omega_M$ will stay outside of $(-\delta,\delta)$. (Here we abuse the notation: if $\omega_L$ returns we update it so that it maps onto $(\delta,2\delta)$, and similarly if $\omega_R$ returns.) Due to Lemma \ref{outside_expansion_lemma} we therefore have
\[
2 \geq \vert \xi_{n_{s+1+j}}(\omega_M) \vert \gtrsim \vert \xi_{n_{s+1}}(\omega_M) \vert \delta C_M e^{\gamma_M j} \geq \delta^2 C_M e^{\gamma_M j} \qquad (j \geq 0),
\]
and clearly we must reach a complete return after $j = t$ iterations, with
\[
t \lesssim \frac{2\Delta - \log C_M}{\gamma_M}.
\]
With this we conclude that if $m_0$ is large enough then there exists a constant $\kappa > 0$ such that
\begin{equation}\label{complete_time_estimate}
m_{k+1}^{rl} \leq m_k^{rl} + \kappa \log m_k^{rl}.
\end{equation}

We finish by estimating how much of $\Omega_{s+1+j}$ is being partitioned at each iteration. By definition of an escape return we have that $\vert \Omega_{s+1}\vert \geq 3\delta$, and since it takes a long time for $\omega_e$ to return, the following estimate is valid:
\begin{equation}\label{partitioned_escape_return}
\frac{\vert I_{s+1+j} \vert}{\vert \Omega_{s+1+j} \vert} \geq \frac{\vert \Omega_{s+1+j} \vert - \delta}{\vert \Omega_{s+1+j} \vert } \geq 1 -\frac{1}{3} = \frac{2}{3}.
\end{equation}

\subsection{Parameter exclusion}
We are finally in the position to estimate how much of $\omega$ is being removed at the next complete return. Up until the first free return, nothing is removed (unless we have a bound return, for which we either remove nothing, or remove enough to consider the return complete). Let $E$ be what is removed in parameter space, and write $\omega = \omega^0$. Taking into account what we partition in between $m_k$ and $m_{k+1}$ we have that
\[
\frac{\vert E \vert}{\vert \omega^0 \vert} = \frac{\vert E \vert}{\vert \omega^{s+t} \vert} \prod_{\nu = 0}^{t-1} \frac{\vert \omega^{s+1+\nu}\vert}{\vert \omega^{s+\nu}\vert} \prod_{\nu=0}^{s-1} \frac{\vert \omega^{1 + \nu}\vert}{\vert \omega^\nu \vert}
\]
Using the the mean value theorem we find that for each factor in the above expression
\begin{align*}
\frac{\vert \omega^j\vert}{\vert \omega^{j-1}\vert} &= \frac{\vert a_j - b_j \vert}{\vert a_{j-1} - b_{j-1} \vert} \\
&= \frac{\vert a_j - b_j \vert}{\vert \xi_{n_j}(a_j) - \xi_{n_j}(b_j)\vert} \frac{\vert \xi_{n_j}(a_{j-1}) - \xi_{n_j}(b_{j-1})\vert}{\vert a_{j-1} - b_{j-1}\vert} \frac{\vert \xi_{n_j}(a_j) - \xi_{n_j}(b_j)\vert}{\vert \xi_{n_j}(a_{j-1}) - \xi_{n_j}(b_{j-1})\vert} \\
&= \frac{1}{\vert \partial_a \xi_{n_j}(c_j)\vert} \vert \partial_a \xi_{n_j}(c_{j-1})\vert \frac{\vert I_{j}\vert}{\vert \Omega_j \vert} \\
&= \frac{\vert \partial_x F^{n_j-1}(1;c_j)\vert}{\vert \partial_a \xi_{n_j}(c_j)\vert} \frac{\vert \partial_a \xi_{n_j}(c_{j-1})\vert}{\vert \partial_x F^{n_j-1}(1;c_{j-1})\vert} \frac{\vert \partial_x F^{n_j-1}(1;c_{j-1})\vert}{\vert \partial_x F^{n_j-1}(1;c_j)\vert} \frac{\vert I_{j}\vert}{\vert \Omega_j \vert}.
\end{align*}
Making use of Lemma~\ref{tsujii_distortion} and Lemma~\ref{main_distortion_lemma}, we find that
\[
\frac{\vert \omega^j \vert}{\vert \omega^{j-1}\vert} : \frac{\vert I_{j} \vert}{\vert \Omega_j \vert} \sim D_A D_1^{(\log^* m_k)^2},
\]
and therefore, using (\ref{number_of_free_returns}), (\ref{partitioned_free_return}), and (\ref{partitioned_escape_return}), there is, provided $m_0$ is large enough, an absolute constant $0 < \tau < 1$ such that
\[
\frac{\vert E \vert}{\vert \omega^0 \vert} \geq \frac{(\delta_{m_{k+1}}/3)}{1}\left(\frac{1}{3}D_A^{-1} D_1^{-(\log^* m_k)^2}\right)^{t + \log^* m_k} \geq \delta_{m_{k+1}}\tau^{(\log^* m_{k+1})^3}.
\]
In particular, for the remaining interval $\hat{\omega} = \omega \smallsetminus E$ we have that
\begin{equation}\label{what_is_left}
\vert \hat{\omega} \vert \leq \vert \omega \vert (1- \delta_{m_{k+1}} \tau^{(\log^* m_{k+1})^3}).
\end{equation}

\section{Main Distortion Lemma}

Before giving a proof of Theorem~\ref{Theorem_B}, we give a proof of the much important distortion lemma that, together with Lemma~\ref{tsujii_distortion}, allow us to restore derivative and to estimate what is removed in parameter space at the $(k+1)^\text{th}$ complete return. The proof is similar to that of Lemma~5 in \cite{BC1}, with the main difference being how we proceed at essential returns. As will be seen, our estimate is unbounded.

If not otherwise stated, the notation is consistent with that of the induction step. Recall that
\[
\Delta_k = N_k \cup T_k,
\]
with $\omega_k \subset N_k$ being mapped onto some $I_{rl} \subset (-4\delta,4\delta)$, and $\omega_k \subset T_k$ being mapped onto an interval $\pm (\delta,x)$ with $\vert x-\delta \vert \geq 3\delta$. Moreover, we let $m_{k+1}(a,b)$ denote the largest time for which parameters $a,b \in \omega_k$ belong to the same parameter interval $\omega_k^j \subset \omega_k$, e.g. if $a,b \in \omega_k^j$ then $m_{k+1}(a,b) \geq n_{j+1}$.

\begin{Lemma}[Main Distortion Lemma]\label{main_distortion_lemma}
Let $\omega_k \subset \Delta_k$, and let $m_k$ be the index of the $k^\text{th}$ complete return. There exists a constant $D_1 > 1$ such that, for $a,b \in \omega_k$ and $j < m_{k+1} = m_{k+1}(a,b)$,
\[
\frac{\vert \partial_x F^j(1;a)\vert}{\vert \partial_x F^j(1;b)\vert} \leq D_1^{(\log^* m_k)^2}.
\]
\end{Lemma}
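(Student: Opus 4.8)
The plan is to estimate the logarithm of the ratio $\partial_x F^j(1;a)/\partial_x F^j(1;b)$ by decomposing the orbit segment $1, \xi_1, \dots, \xi_{j-1}$ (for parameters $a$ and $b$) according to the combinatorial structure set up in the induction step: free periods, bounded periods, and the returns (inessential, essential, escape, complete). Writing $\partial_x F^j(1;a) = \prod_{\nu=0}^{j-1} \partial_x F(\xi_\nu(a);a) = \prod_{\nu=0}^{j-1}(-2a\,\xi_\nu(a))$, we get
\[
\log \left\vert \frac{\partial_x F^j(1;a)}{\partial_x F^j(1;b)} \right\vert = \sum_{\nu=0}^{j-1} \log \left\vert \frac{a\,\xi_\nu(a)}{b\,\xi_\nu(b)} \right\vert \leq \sum_{\nu=0}^{j-1} \left( \log\frac{a}{b} + \left\vert \frac{\xi_\nu(a)-\xi_\nu(b)}{\xi_\nu(b)} \right\vert \right),
\]
so the task reduces to bounding $\sum_\nu \vert \xi_\nu(a)-\xi_\nu(b)\vert / \vert \xi_\nu(b)\vert$ (the $\log(a/b)$ terms are harmless since $\vert\omega_0\vert$ is tiny and $j \lesssim m_{k}$, by the time estimate $(\ref{complete_time_estimate})$; in fact one absorbs them by shrinking $\epsilon$). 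The key point is that $\vert \xi_\nu(\omega_k)\vert$ is controlled: at and after each return the image is bounded below by the relevant partition-element length, and it grows back at a definite (expanding) rate during free periods by the Outside Expansion Lemma and by Lemma \ref{bounded_growth_lemma} during bounded periods.

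The core estimate is therefore: during a free period following a return into $I_r$ (host or essential interval), $\vert \xi_\nu(b)\vert \geq \delta C_M e^{\gamma_M(\nu - \nu_0)} \cdot (\text{something})$ grows geometrically away from scale $e^{-r}$, while $\vert \xi_\nu(a) - \xi_\nu(b)\vert \leq \vert \xi_\nu(\omega_k)\vert$, and crucially $\vert \xi_{\nu}(\omega_k)\vert$ itself stays comparable to the current scale of $\vert\xi_\nu(b)\vert$ — because the interval is being iterated and, at returns, repartitioned so that its image never substantially exceeds the host/essential interval until escape. Hence each free-period contribution is a geometric sum $\sum_\nu e^{-\gamma_M(\nu-\nu_0)} = O(1)$; similarly during bounded periods the binding condition (BC) gives $\vert \xi_\nu(a)-F^\nu(\eta;a)\vert \leq \vert\xi_\nu(a)\vert/(10\nu^2)$ and the analogous comparison between $\xi_\nu(a)$ and $\xi_\nu(b)$ is controlled, with $\sum 1/\nu^2 = O(1)$. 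The dangerous contributions come from the returns themselves: at a return into $I_{rl}$, $\vert\xi_n(\omega_k)\vert \approx \vert I_{rl}\vert = e^{-r}/r^2$ while $\vert\xi_n(b)\vert$ can be as small as $e^{-r-1}$, giving a single-step contribution of order $1/r^2 \leq 1$ (harmless), but the number of such returns must be counted. By Lemma \ref{inessential_period_lemma} there are at most $O(r)$ inessential returns per essential return, each contributing $O(1/r_j^2)$, summing to $O(1)$ per essential stage; and by $(\ref{number_of_free_returns})$ there are at most $\log^* m_k$ essential returns before an escape. After escape one more $O(1)$ contribution per complete-return cycle is incurred.

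Assembling this: across the $k$ complete-return cycles up to time $m_k$, the total is dominated by $\sum_{\text{cycles}} (\text{per-cycle bound})$. A single cycle from $m_i$ to $m_{i+1}$ contributes $O(\log^* m_i)$ (one $O(1)$ per essential stage, times the number $\le \log^* m_i$ of stages, plus free/escape contributions), and there are at most $\sim \log^* m_k$ — no, rather one should count cycles more carefully: the number of complete returns up to time $m_k$ is itself bounded in terms of $m_k$, and the clean bound the statement wants is $(\log^* m_k)^2$. The intended mechanism is that within the cycle the contribution is $O(\log^* m_k)$ (from the $\log^* m_k$ essential returns, each geometric block being $O(1)$ after distortion is restored at the \emph{previous} return), and across cycles the sum telescopes because at each complete return the derivative ratio is \emph{reset} to a bounded quantity via the partition (each $\omega_k$ is a genuinely new interval on which we restart), so only the current cycle contributes its $O(\log^* m_k)$, and the extra factor $\log^* m_k$ accounts for the at most $\log^* m_k$ essential sub-returns each needing its own bounded-distortion pass. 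I would make this precise by an induction on returns within the cycle, carrying the bound $D_1^{(\log^* m_k)^2}$ as a loop invariant that is restored (not merely preserved) at every essential and escape return via Lemma \ref{restore_derivative}. The main obstacle is exactly this bookkeeping: ensuring that $\vert \xi_\nu(\omega_k)\vert$ never becomes large relative to $\vert \xi_\nu(b)\vert$ outside of controlled windows — equivalently, that the repartitioning at essential/escape returns genuinely keeps the image pinned near the current scale — and then verifying that the geometric and $1/\nu^2$ sums, multiplied by the combinatorial counts from Lemmas \ref{bounded_period_lemma}, \ref{free_period_lemma}, \ref{inessential_period_lemma} and $(\ref{number_of_free_returns})$, close up to give precisely an exponent $(\log^* m_k)^2$ rather than something larger.
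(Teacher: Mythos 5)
Your setup matches the paper's: reduce to bounding $\Sigma=\sum_\nu |\xi_\nu(a)-\xi_\nu(b)|/|\xi_\nu(b)|$, absorb free-period contributions geometrically into the next return, bound bounded-period contributions via (BC), and count at most $\log^* m_k$ essential returns per cycle via (\ref{number_of_free_returns}). But the final assembly has a genuine gap, and it is the crux of the lemma.

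You claim that "across cycles the sum telescopes because at each complete return the derivative ratio is reset to a bounded quantity via the partition." This is false, and it is precisely the difficulty the paper's proof confronts. The distortion is not reset at a complete return: the sum $\Sigma_1$ runs over the \emph{entire} history of the interval $\omega_k$, i.e. over all indices $\nu$ from $m_0$ up to the last return, and the interval $\omega_k$ under consideration is a nested subinterval of each of its ancestors $\omega_0\supset\omega_1\supset\cdots\supset\omega_{k-1}$, so contributions from every earlier cycle genuinely persist in $\Sigma_1$. A naive bound of "$O(\log^* m_k)$ per cycle" times the number $k$ of cycles would give $O(k\log^* m_k)$, which is far too large since $k$ is essentially proportional to $m_k/\log m_k$. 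What actually rescues the bound is that, looking backward in time, the images $\xi_{n_{j,\nu}}(\omega_k)$ at earlier returns are progressively tinier fractions of the partition elements $I_{r_{j,\nu}}$ — between two consecutive returns distortion is bounded by a fixed constant $C_1$, so each step back picks up at most a factor $C_1$ while dividing by $r^2\geq\Delta^2\gg C_1$. This gives a geometric decay (the "good" estimates), interrupted only when the return depth $r_{k-j}$ suddenly jumps so high that $C_2^{\log^* r_{k-j}}$ overtakes $r_{k-j+1}$ (the "bad" estimates). The paper shows the bad indices force a tower-exponentially growing sequence $R_1, R_2, \dots$, so there can be at most $\sim\log^* m_k$ of them, each contributing $\leq\log^* m_k$, whence $(\log^* m_k)^2$. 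Without this good/bad dichotomy (or something equivalent), there is no way to pass from a per-cycle bound to a global bound, and Lemma \ref{restore_derivative} cannot help: it restores the \emph{derivative} condition (BE)(2), not distortion, and in fact its proof \emph{uses} the Main Distortion Lemma, so invoking it here as a loop invariant restoring distortion would be circular.

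A secondary, smaller imprecision: the factor $(\log^* m_k)^2$ is not "one $\log^* m_k$ from essential returns within the cycle, times one $\log^* m_k$ for per-sub-return passes." The two factors arise as (number of bad cycle indices $s\lesssim\log^* m_k$) $\times$ (trivial bound $\log^* m_k$ for $S_{m_{k-j_i}}$ at each bad index), plus an $O(1)$ geometric tail from the good indices.
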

\begin{proof}
Using the chain rule and the elementary inequality $x+1 \leq e^x$ we have
\begin{align*}
\frac{\vert \partial_x F^j(1;a)\vert}{\vert \partial_x F^j(1;b)\vert} &= \prod_{\nu= 0}^{j-1} \frac{\vert \partial_x F(F^\nu(1;a);a) \vert}{\vert \partial_x F(F^\nu(1;b);b) \vert} \\
&= \left(\frac{a}{b}\right)^j \prod_{\nu =1}^j \frac{\vert \xi_\nu(a) \vert}{\vert \xi_\nu(b)\vert} \\
&\leq \left( \frac{a}{b} \right)^j \prod_{\nu=1}^j \left(\frac{\vert \xi_\nu(a) - \xi_\nu(b) \vert}{\vert \xi_\nu(b)\vert} + 1\right) \\
&\leq \left(\frac{a}{b}\right)^j \exp\left(\sum_{\nu=1}^j \frac{\vert \xi_\nu(a) - \xi_\nu(b) \vert}{\vert \xi_\nu(b) \vert}\right).
\end{align*}
We claim that the first factor in the above expression can be made arbitrarily close to $1$. To see this, notice that
\[
\left(\frac{a}{b}\right)^j \leq \left(1 +  \vert \omega_k \vert \right)^j.
\]
Using (BE)(1) and Lemma~\ref{tsujii_distortion} we have that $\vert \omega_k\vert \lesssim e^{-\gamma m_k}$, and for $m_0$ large enough we have from (\ref{complete_time_estimate}) that $j < m_{k+1} \leq m_k + \kappa \log m_k \leq 2 m_k$; therefore
\[
\left(1+ \vert \omega_k \vert\right)^j \leq (1 + e^{-(\gamma/2)m_k})^{2m_k}.
\]
Since
\[
(1 + e^{-(\gamma/2)m_k})^{2m_k} \leq (1+e^{-(\gamma/2)m_0})^{2m_0} \to 1 \quad \text{as}\quad m_0 \to \infty,
\]
making $m_0$ larger if needed proves the claim. It is therefore enough to only consider the sum
\[
\Sigma = \sum_{\nu = 1}^{m_{k+1} -1} \frac{\vert \xi_\nu(a) - \xi_\nu(b)\vert}{\vert \xi_\nu(b)\vert}.
\]
With $m_k^* \leq m_{k+1}$ being the last index of a return, i.e. $\xi_{m_{k^*}}(\omega_k) \subset I_{r_k^*} \subset (-4\delta,4\delta)$, we divide $\Sigma$ as
\[
\Sigma = \sum_{\nu = 1}^{m_k^*-1} + \sum_{\nu = m_k^*}^{m_{k+1}-1} = \Sigma_1 + \Sigma_2,
\]
and begin with estimating $\Sigma_1$. 

The history of $\omega_k$ will be that of $\omega_0, \omega_1,\dots,\omega_{k-1}$. Let $\{t_j\}_{j=0}^N$ be all the inessential, essential, escape, and complete returns. We further divide $\Sigma_1$ as
\[
\sum_{\nu = 1}^{m_k^*-1} \frac{\vert \xi_\nu(a) - \xi_\nu(b)\vert}{\vert \xi_\nu(b)\vert} = \sum_{j=0}^{N-1}\sum_{\nu = t_j}^{t_{j+1}-1}\frac{\vert \xi_\nu(a) - \xi_\nu(b) \vert}{\vert \xi_\nu(b)\vert} = \sum_{j=0}^{N-1} S_j.
\]
The contribution to $S_j$ from the bound period is 
\[
\sum_{\nu=0}^{p_j} \frac{\vert \xi_{t_j+\nu}(a) - \xi_{t_j + \nu}(b)\vert}{\vert \xi_{t_j+\nu}(b)\vert} \lesssim \frac{\vert \xi_{t_j}(\omega)\vert}{\vert \xi_{t_j}(b) \vert} + \frac{\vert \xi_{t_j}(\omega) \vert}{\vert \xi_{t_j}(b)\vert}\sum_{\nu=1}^{p_j}\frac{e^{-2r_j} \vert \partial_xF^{\nu-1}(1;a)\vert}{\vert \xi_\nu(b) \vert}.
\]
Let $\iota = (\kappa_1 \log 4)^{-1}$ and further divide the sum in the above right hand side as
\[
\sum_{\nu=1}^{\iota p_j} + \sum_{\nu = \iota p_j + 1}^{p_j}.
\]
To estimate the first sum we use the inequalities $\vert \partial_x F^\nu \vert \leq 4^\nu$ and $\vert \xi_\nu(b) \vert \geq \delta_\nu/3 \gtrsim \nu^{-\overline{e}}$, and that $p_j \leq \kappa_1 r_j$, to find that
\begin{align*}
\sum_{\nu=1}^{\iota p_j} \frac{e^{-2r_j} \vert\partial_xF^{\nu-1}(1;a)\vert}{\vert \xi_\nu(b)\vert} &\lesssim e^{-2r_j}\sum_{\nu=1}^{\iota p_j} 4^\nu \nu^{\overline{e}} \\
&\lesssim e^{-2 r_j} 4^{\iota p_j}p_j^{\overline{e}} \\
&\lesssim \frac{r_j^{\overline{e}}}{e^{r_j}}.
\end{align*}
To estimate the second sum we use (BC) and the equality (\ref{bound_estimate}), and find that
\[
\sum_{\nu = \iota p_j + 1}^{p_j} \frac{e^{-2r_j} \vert \partial_x F^{\nu-1}(1;a)\vert}{\vert \xi_\nu(b)\vert} \lesssim \frac{1}{r_j^2}.
\]
Therefore the contribution from the bound period adds up to
\begin{align*}
\sum_{\nu = t_j}^{t_j+p_j} \frac{\vert \xi_\nu(a) - \xi_\nu(b)\vert}{\vert \xi_\nu(b) \vert} &\lesssim \frac{\vert \xi_{t_j}(\omega) \vert}{\vert \xi_{t_j}(b) \vert} + \frac{\vert \xi_{t_j}(\omega)\vert}{\vert \xi_{t_j}(b)\vert}\left(\frac{1}{r_j^2} + \frac{r_j^{\overline{e}}}{e^{r_j}}\right) \\
&\lesssim \frac{\vert \xi_{t_j}(\omega) \vert}{\vert \xi_{t_j}(b)\vert}.
\end{align*}

After the bound period and up to time $t_{j+1}$ we have a free period of length $L_j$ during which we have exponential increase of derivative. We wish to estimate
\[
\sum_{\nu = t_j + p_j + 1}^{t_{j+1}-1} \frac{\vert \xi_\nu(a) - \xi_\nu(b)\vert}{\vert \xi_\nu(b)\vert} = \sum_{\nu = 1}^{L_j-1} \frac{\vert \xi_{t_j+p_j + \nu}(a) - \xi_{t_j+p_j + \nu}(b) \vert}{\vert \xi_{t_j+p_j + \nu}(b)\vert}.
\]
Using the mean value theorem, parameter independence and Lemma~\ref{outside_expansion_lemma}, we have that for $1 \leq \nu \leq L_j-1$
\begin{align*}
\vert \xi_{t_{j+1}}(a) - \xi_{t_{j+1}}(b)\vert &= \vert \xi_{t_j+p_j+L_j}(a) - \xi_{t_j + p_j + L_j}(b)\vert \\
&\simeq \vert F^{L_j-\nu}(\xi_{t_j+p_j+\nu}(a);a) - F^{L_j -\nu}(\xi_{t_j+p_j+\nu}(b);a)\vert \\
&= \vert \partial_x F^{L_j-\nu}(\xi_{t_j+p_j+\nu}(a');a)\vert \vert \xi_{t_j+p_j+\nu}(a) - \xi_{t_j+p_j+\nu}(b)\vert \\
&\gtrsim e^{\gamma_M (L_j-\nu)}\vert \xi_{t_j+p_j+\nu}(a) - \xi_{t_j+p_j+\nu}(b)\vert,
\end{align*}
and therefore
\begin{equation}\label{free_period_contribution}
\vert \xi_{t_j+p_j+\nu}(a) - \xi_{t_j+p_j+\nu}(b) \vert \lesssim \frac{\vert \xi_{t_{j+1}}(a) - \xi_{t_{j+1}}(b) \vert}{e^{\gamma_M (L_j-\nu)}},
\end{equation}
provided $\xi_{t_{j+1}}(\omega)$ does not belong to an escape interval. If $\xi_{t_{j+1}}(\omega)$ belongs to an escape interval, then we simply extend the above estimate to $t_{j+2}, t_{j+3},\dots$, until we end up inside some $I_{rl} \subset (-4\delta,4\delta)$ (which will eventually happen, per definition of $m_k^*$). Hence we may disregard escape returns, and see them as an extended free period.

Since $\vert \xi_{t_{j+1}}(b) \vert \leq \vert \xi_{t_j+p_j+\nu}(b) \vert$ for $1 \leq \nu \leq L_j-1$, it follows from the above inequality that
\begin{align*}
\sum_{\nu=1}^{L_j-1} \frac{\vert \xi_{t_j+p_j+\nu}(a) - \xi_{t_j+p_j+\nu}(b)\vert}{\vert \xi_{t_j+p_j+\nu}(b)\vert} &\leq \frac{\vert \xi_{t_{j+1}}(a) - \xi_{t_{j+1}}(b)\vert}{\vert \xi_{t_{j+1}}(b)\vert}\sum_{\nu=1}^{L_j -1} e^{-\gamma_M (L_j-\nu)} \\
&\lesssim \frac{\vert \xi_{t_{j+1}}(a) - \xi_{t_{j+1}}(b)\vert}{\vert \xi_{t_{j+1}}(b) \vert},
\end{align*}
thus the contribution from the free period is absorbed in $S_{j+1}$.

What is left is to give an estimate of
\[
\sum_{\nu = m_0}^{m_k^*-1} \frac{\vert \xi_\nu(a) - \xi_\nu(b)\vert}{\vert \xi_\nu(b)\vert} \lesssim \sum_{j=0}^N \frac{\vert \xi_{t_j}(\omega)\vert}{\vert \xi_{t_j}(b)\vert} \lesssim \sum_{j=0}^N \frac{\vert \xi_{t_j}(\omega)\vert}{\vert I_{r_j}\vert},
\]
where, with the above argument, $\{t_j\}_{j=0}^N$ are now considered to be indices of inessential, essential, and complete returns only. Because of the rapid growth rate, we will see that among the returns to the same interval, only the last return will be significant. From Lemma~\ref{bound_growth_lemma} we have that $\vert \xi_{t_j+1}(\omega) \vert \gtrsim (e^{r_j}/r_j^{\kappa_2}) \vert \xi_{t_j}(\omega) \vert \gg 2 \vert \xi_{t_j}(\omega)\vert $, hence with $J(\nu)$ the last $j$ for which $r_j = \nu$,
\[
\sum_{j=0}^N \frac{\vert \xi_{t_j}(\omega)\vert}{\vert I_{r_j}\vert} = \sum_{\nu \in \{r_j\}} \frac{1}{\vert I_\nu\vert} \sum_{r_j = \nu} \vert \xi_{t_j}(\omega)\vert \lesssim \sum_{\nu \in \{r_j\}} \frac{\vert \xi_{t_{J(\nu)}}(\omega) \vert}{\vert I_{\nu}\vert}.
\]
If $t_{J(\nu)}$ is the index of an inessential return, then $\vert \xi_{t_{J(\nu)}}(\omega_k) \vert / \vert I_\nu \vert \lesssim \nu^{-2}$, and therefore the contribution from the inessential returns to the above left most sum is bound by some small constant. It is therefore enough to only consider the contribution from essential returns and complete returns. To estimate this contribution we may assume that $m_k^* \geq m_k$, and that $\xi_{m_k}(\omega) = I_{r_k l}$. Moreover, we assume that $\xi_{m_j}(\omega) = I_{r_j l}$ for all $j$.

With $n_{j,0} = m_j$ being the index of the $j^\text{th}$ complete return, and $n_{j,\nu} \in (m_j,m_{j+1})$ being the index of the $\nu^\text{th}$ essential return for which $\xi_{n_{j,\nu}}(\omega) \subset I_{r_{j,\nu}}$, we write
\[
\sum_{\nu \in \{r_j\}} \frac{\vert \xi_{t_{J(\nu)}}(\omega)\vert}{\vert I_\nu\vert} \lesssim \sum_{j = 0}^{k} \sum_{\nu = 0}^{\nu_j} \frac{\vert \xi_{n_{j,\nu}}(\omega) \vert}{\vert I_{r_{j,\nu}} \vert} = \sum_{j=0}^{k} S_{m_j}.
\]
For the last partial sum we use the trivial estimate $S_{m_k} \leq \log^* m_k$. To estimate $S_{m_j}$, for $j \neq k$, we realise that between any two free returns $n_{j,\nu}$ and $n_{j,\nu+1}$ the distortion is uniformly bound by some constant $C_1 > 1$. Therefore
\[
\frac{\vert \xi_{n_{k-1,\nu_{k-1}-j}}(\omega)\vert}{\vert I_{r_{k-1,\nu_{k-1}-j}} \vert} \leq \frac{C_1^j}{r_k^2},
\]
and consequently, since $\nu_j \leq \log^* r_j$ (see (\ref{number_of_free_returns})),
\[
S_{m_{k-1}} \leq \frac{C_2^{\log^* r_{k-1}}}{r_k^2},
\]
for some uniform constant $C_2 > 1$. Continuing like this, we find that
\begin{align*}
S_{m_{k-j}} &\leq \frac{C_2^{\log^* r_{k-j}}C_2^{\log^* r_{k-j+1}} \cdots C_2^{\log^* r_{k-1}}}{r_{k-j+1}^2 r_{k-j+2}^2 \cdots r_k^2} \\
&\leq \frac{C_2^{\log^*{r_{k-j}}}}{r_{k-j+1}^{3/2} r_{k-j+2}^{3/2} \cdots r_{k}^2},
\end{align*}
where we in the last inequality used the (very crude) estimate
\[
C_2^{\log^* x} \leq \sqrt{x}.
\]
Let us call the estimate of $S_{m_{k-j}}$ \emph{good} if $C_2^{\log^* r_{k-j}} \leq r_{k-j+1}$. For such $S_{m_{k-j}}$ we clearly have
\[
S_{m_{k-j}} \leq \frac{1}{\Delta^{j/2}}.
\]
Let $j_1 \geq 1$ be the smallest integer for which $S_{m_{k-j_1}}$ is not good, i.e. 
\[
\log^* r_{k-j_1} \geq (\log C_2)^{-1} \log r_{k-j_1+1} \geq (\log C_2)^{-1} \log\Delta.
\]
We call this the first \emph{bad} estimate, and for the contribution from $S_{m_{k-j_1}}$ to the distortion we instead use the trivial estimate 
\[
S_{m_{k-j_1}} \leq \log^* r_{k-j_1} \leq \log^* m_k.
\]
Suppose that $j_2 > j_1$ is the next integer for which
\[
C_2^{\log^* r_{k-j_2}} \geq r_{k-j_2+1}.
\]
If it turns out that
\[
C_2^{\log^* r_{k-j_2}} \leq r_{k-j_1},
\]
then 
\[
S_{m_k-j_2} \leq \frac{1}{\Delta^{j/2}},
\]
and we still call this estimate good. If not, then
\[
\log^* r_{k-j_2} \geq (\log C_2)^{-1} \log r_{k-j_1},
\]
and $j_2$ is the index of the second bad estimate.
Continuing like this, we get a number $s$ of bad estimates and an associated sequence $R_i = r_{k-j_i}$ satisfying
\begin{align*}
\log^* R_1 &\geq (\log C_2)^{-1} \log \Delta, \\
\log^* R_2 &\geq (\log C_2)^{-1} \log R_1, \\
&\vdots \\
\log^* R_s &\geq (\log C_2)^{-1} \log R_{s-1}.
\end{align*}
This sequence grows incredibly fast, and its not difficult to convince oneself that
\[
R_s \gg \underbrace{e^{e^{.^{.^{.^{e}}}}}}_{s\ \text{copies of}\ e}.
\]
In particular, since $R_s \leq m_k$ we find that
\[
s \ll \log^*R_s \leq \log^* m_k.
\]
We conclude that
\begin{align*}
\left(\sum_{\text{good}} + \sum_{\text{bad}}\right) S_{m_j} &\leq \sum_{j=1}^\infty \frac{1}{\Delta^{j/2}} + s \log^* m_k \\
&\lesssim (\log^* m_k)^2,
\end{align*}
hence
\[
\sum_{\nu = 1}^{m_k^*-1} \frac{\vert \xi_\nu(a) - \xi_\nu(b)\vert}{\vert \xi_\nu(b)\vert} \lesssim (\log^* m_k)^2.
\]

From $m_{k^*}$ to $m_{k+1} - 1$, the assumption is that we only experience an orbit outside $(-\delta,\delta)$. By a similar estimate as (\ref{free_period_contribution}) we find that for $\nu \geq 1$
\[
\vert \xi_{m_{k^*}+\nu}(a) - \xi_{m_{k^*}+\nu}(b)\vert \lesssim \frac{\vert \xi_{m_k-1}(a) - \xi_{m_k-1}(b)\vert}{\delta e^{\gamma_M(m_k - 1 - m_{k^*}-\nu)}},
\]
and therefore
\[
\sum_{\nu = m_k^*}^{m_{k+1}-1} \frac{\vert \xi_\nu(a) - \xi_\nu(b) \vert}{\vert \xi_\nu(b) \vert} \lesssim  1 + \frac{1}{\delta^2} \leq (\log^* m_k)^2,
\]
provided $m_0$ is large enough. This proves the lemma.
\end{proof}

\section{Proof of Theorem B}
Returning to the more cumbersome notation used in the beginning of the induction step, let $\omega_k^{rl} \subset \Delta_k$. We claim that a similar inequality as (\ref{what_is_left}) is still true if we replace $\hat{\omega}$ and $\omega$ with $\Delta_{k+1}$ and $\Delta_k$, respectively. To realise this write $\Delta_{k+1}$ as the disjoint union
\[
\Delta_{k+1} = \bigcup \omega_{k+1}^{rl} = \bigcup \hat{\omega}_k^{rl}.
\]
With $m_0$ being the start time, consider the sequence of integers defined by the equality
\[
m_{k+1} = \lceil m_k + \kappa \log m_k \rceil \qquad (k \geq 0),
\]
where $\lceil x \rceil$ denotes the smallest integer satisfying $x \leq \lceil x \rceil$. By induction, using (\ref{complete_time_estimate}),
\[
m_{k+1}^{rl} \leq m_k^{r'l'} + \kappa \log m_k^{r'l'} \leq m_k + \kappa \log m_k \leq m_{k+1}.
\]
Hence the sequence $(m_k)$ dominates every other sequence $(m_k^{rl})$, and therefore it follows from (\ref{what_is_left}) that
\begin{align*}
\vert \Delta_{k+1} \vert &= \sum \vert \hat{\omega}_k^{rl} \vert \\
&= \sum \vert \omega_k^{rl} \vert (1-\delta_{m_{k+1}^{rl}} \tau^{(\log^*m_{k+1}^{rl})^3}) \\
&\leq \left(\sum \vert \omega_k^{rl} \vert \right) (1-\delta_{m_{k+1}}\tau^{(\log^* m_{k+1})^3}) \\
&= \vert \Delta_k \vert (1-\delta_{m_{k+1}}\tau^{(\log^* m_{k+1})^3}).
\end{align*}

By construction
\[
\Lambda_{m_0}(\delta_n) \cap \CE(\gamma_B,C_B) \cap \omega_0 \subset \Delta_\infty = \bigcap_{k=0}^\infty \Delta_k,
\]
and therefore, to prove Theorem~\ref{Theorem_B}, it is sufficient to show that
\[
\prod_{k=0}^\infty(1-\delta_{m_k}\tau^{(\log^* m_{k})^3}) = 0.
\]
By standard theory of infinite products, this is the case if and only if
\[
\sum_{k = 0}^\infty \delta_{m_k}\tau^{(\log^* m_k)^3} = \infty.
\]
To evaluate the above sum we make use of the following classical result, due to Schl\"omilch~\cite{Schlomilch} (see also \cite{Knopp}, for instance).
\begin{Prop}[Schl\"omilch Condensation Test]
Let $q_0 < q_1 < q_2 \cdots$ be a strictly increasing sequence of positive integers such that there exists a positive real number $\alpha$ such that
\[
\frac{q_{k+1} - q_k}{q_k - q_{k-1}} < \alpha \qquad (k \geq 0).
\]
Then, for a nonincreasing sequence $a_n$ of positive nonnegative real numbers,
\[
\sum_{n = 0}^\infty a_n = \infty \quad \text{if and only if}\quad \sum_{k=0}^\infty (q_{k+1} -q_k)a_{q_k} = \infty.
\]
\end{Prop}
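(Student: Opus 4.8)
The plan is to prove the two implications by comparing the series $\sum_n a_n$, grouped into the blocks cut out by the points $q_k$, with the condensed series $\sum_k (q_{k+1}-q_k)\,a_{q_k}$; as it turns out, only one of the two directions actually uses the gap-ratio hypothesis.

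First I would discard the finitely many terms $a_0,\dots,a_{q_0-1}$, which affect neither the convergence of $\sum_n a_n$ nor the stated equivalence, and write
\[
\sum_{n\geq q_0} a_n \;=\; \sum_{k=0}^\infty\ \sum_{n=q_k}^{q_{k+1}-1} a_n .
\]
Since $(a_n)$ is nonnegative and nonincreasing, within the $k$-th block every term lies between $a_{q_{k+1}}$ and $a_{q_k}$, and the block consists of exactly $q_{k+1}-q_k$ terms, so
\[
(q_{k+1}-q_k)\,a_{q_{k+1}} \;\leq\; \sum_{n=q_k}^{q_{k+1}-1} a_n \;\leq\; (q_{k+1}-q_k)\,a_{q_k}.
\]
Summing over $k\geq 0$ sandwiches $\sum_{n\geq q_0} a_n$ between $\sum_{k\geq 0}(q_{k+1}-q_k)\,a_{q_{k+1}}$ and $\sum_{k\geq 0}(q_{k+1}-q_k)\,a_{q_k}$. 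The right-hand inequality gives one implication with no further hypothesis: if the condensed series converges then $\sum_n a_n$ converges, i.e.\ divergence of $\sum_n a_n$ forces divergence of the condensed series.

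For the converse I would use the left-hand inequality together with the gap-ratio bound. Shifting the summation index ($k\mapsto k-1$) rewrites $\sum_{k\geq 0}(q_{k+1}-q_k)\,a_{q_{k+1}}$ as $\sum_{k\geq 1}(q_k-q_{k-1})\,a_{q_k}$, whence
\[
\sum_{k\geq 1}(q_k-q_{k-1})\,a_{q_k} \;\leq\; \sum_{n\geq q_0} a_n .
\]
Now the hypothesis $q_{k+1}-q_k<\alpha(q_k-q_{k-1})$ yields $\sum_{k\geq 1}(q_{k+1}-q_k)\,a_{q_k}\leq \alpha\sum_{n\geq q_0} a_n$, and adding the single finite term $(q_1-q_0)\,a_{q_0}$ shows that convergence of $\sum_n a_n$ implies convergence of $\sum_k (q_{k+1}-q_k)\,a_{q_k}$; contrapositively, divergence of the condensed series forces divergence of $\sum_n a_n$.

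The argument is entirely elementary, so I do not anticipate a real obstacle; the only point requiring a little care is the bookkeeping in the index shift and noting that the hypothesis $\frac{q_{k+1}-q_k}{q_k-q_{k-1}}<\alpha$ is used only for the implication ``condensed series diverges $\Rightarrow\sum_n a_n$ diverges'' — the reverse implication holds for any strictly increasing integer sequence $(q_k)$. This is precisely the generalization of Cauchy's condensation test, which is recovered by taking $q_k=2^k$ (so $q_{k+1}-q_k=2^k$ and $\alpha=2$).
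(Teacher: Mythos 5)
Your proof is correct and follows essentially the same route as the paper: the same block decomposition along the $q_k$'s, the same sandwich $(q_{k+1}-q_k)a_{q_{k+1}} \leq \sum_{n=q_k}^{q_{k+1}-1} a_n \leq (q_{k+1}-q_k)a_{q_k}$, and the same use of the gap-ratio bound (after reindexing) to recover the condensed series from the lower estimate. The only difference is that you spell out the index shift and the handling of the $k=0$ term more explicitly, whereas the paper compresses this into the single display with the $\alpha^{-1}$ factor.
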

\begin{proof}
We have
\[
(q_{k+1}-q_k) a_{q_{k+1}} \leq \sum_{n=0}^{q_{k+1} - q_k -1} a_{q_k + n} \leq (q_{k+1}-q_k) a_{q_k},
\]
and therefore
\[
\alpha^{-1}\sum_{k=0}^\infty (q_{k+2}-q_{k+1}) a_{q_{k+1}} \leq \sum_{n = q_0}^\infty a_n \leq \sum_{k=0}^\infty (q_{k+1} - q_k) a_{q_k}.
\]
\end{proof}
Since $m_{k+1} - m_k \sim \log m_k$ is only dependent on $m_k$, we can easily apply the above result in a backwards manner. Indeed we have that
\begin{align*}
\frac{m_{k+1} - m_k}{m_{k} - m_{k-1}} &\leq \frac{\kappa \log m_k + 1}{\kappa \log m_{k-1}} \\
&\leq \frac{\kappa \log \left(m_{k-1} + \kappa\log m_{k-1} + 1\right) + 1}{\kappa \log m_{k-1}} \\
&\leq 1 + \frac{\operatorname{const.}}{\log m_0} \qquad (k \geq 0),
\end{align*}
and therefore with $q_k = m_k$ and $a_n = \delta_n \tau^{(\log^* n)^3}/\log n$, the preconditions of Schl\"omilch's test are satisfied. We conclude that
\[
\sum_{n = m_0}^\infty \frac{\delta_n}{\log n}\tau^{(\log^*n)^3} = \infty
\]
if and only if
\[
\sum_{k=0}^\infty (m_{k+1} - m_k)\frac{\delta_{m_k}}{\log m_k} \tau^{(\log^* m_k)^3} \sim \sum_{k=0}^\infty \delta_{m_k} \tau^{(\log^* m_k)^3} = \infty.
\]
This proves Theorem~\ref{Theorem_B}.

\bibliographystyle{alpha}
\bibliography{references}

\end{document}